\newtheorem{theorem}{Theorem}
\newtheorem{definition}{Definition}
\newtheorem{lemma}[theorem]{Lemma}
\newtheorem{proposition}[theorem]{Proposition}
\begin{document}

\title[]{Unknotting Unknots}
\author{A. Henrich}
\address{Seattle University\\
Seattle, WA 98122}
\email{henricha@seattleu.edu}
\author{L. Kauffman}
\address{University of Illinois, Chicago\\
Chicago, IL 60607}
\email{kauffman@uic.edu}

\date{\today}
\keywords{Reidemeister moves, unknot}

\begin{abstract} A knot is an an embedding of a circle into three--dimensional space. We say that a knot is unknotted if there is an ambient isotopy of the embedding to a standard circle. By representing knots via planar diagrams, we discuss the problem of unknotting a knot diagram when we know that it is unknotted. This problem is surprisingly difficult, since it has been shown that knot diagrams may need to be made more complicated before they may be simplified. We do not yet know, however, how much more complicated they must get. We give an introduction to the work of Dynnikov who discovered the key use of arc--presentations to solve the problem of finding a way to detect the unknot directly from a diagram of the knot. Using Dynnikov's work, we show how to obtain a quadratic upper bound for the number of crossings that must be introduced into a sequence of unknotting moves. We also apply Dynnikov's results to find an upper bound for the number of moves required in an unknotting sequence.
\end{abstract}
\maketitle

\section{Introduction}
When one first delves into the theory of knots, one learns that knots are typically studied using their diagrams. The first question that arises when considering these knot diagrams is: how can we tell if two knot diagrams represent the same knot? Fortunately, we have a partial answer to this question. Two knot diagrams represent the same knot in $\mathbb{R}^3$ if and only if they can be related by the Reidemeister moves, pictured below. Reidemeister proved this theorem in the 1920's~ \cite{Reid}, and 
it is the underpinning of much of knot theory. For example, J. W. Alexander based the original definition of his celebrated polynomial on the Reidemeister moves~\cite{Alex}. 

\begin{figure}[h]
\begin{center}
\includegraphics[trim = 0mm 0mm 0mm 0mm, clip, height=1.5in]{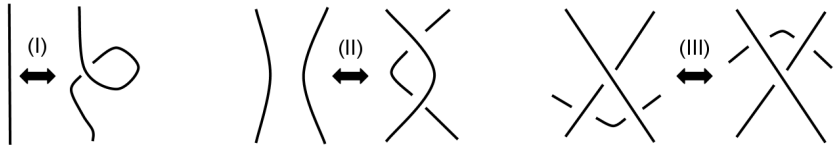}
\end{center}
\vspace{-.3in}
\caption{The three Reidemeister moves}
\end{figure}

Now imagine that you are presented with a complicated diagram of an unknot and you would like to use Reidemeister moves to reduce it to the trivial diagram that has no crossings. In considering a problem of this sort, you stumble upon a curious fact. Given a diagram of an unknot to be unknotted, it might be necessary to make the diagram more complicated before it can be simplified. We call such a diagram
a {\em hard unknot diagram} \cite{KL}.
A nice example of this is the Culprit, shown in Figure~\ref{culprit_intro}. If you look closely, you'll find that no simplifying type I or type II Reidemeister moves and no type III moves are available. Yet this is indeed the unknot. In order to unknot it, we need to introduce new crossings with Reidemeister I and II moves. In Figure~\ref{unknotCulprit}, we see that we can unknot the Culprit by making the diagram larger by two crossings (via a Reidemeister move of type two) and that it takes a total of ten Reidemeister moves to accomplish the unknotting.
\bigbreak

\begin{figure}[h]
\begin{center}
\includegraphics[height=1.2in, angle=90]{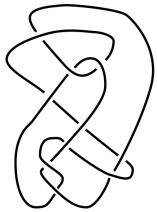}
\end{center}\label{culprit_intro}
\caption{The Culprit}
\end{figure}

\begin{figure}[h]
\begin{center}
\includegraphics[height=2in]{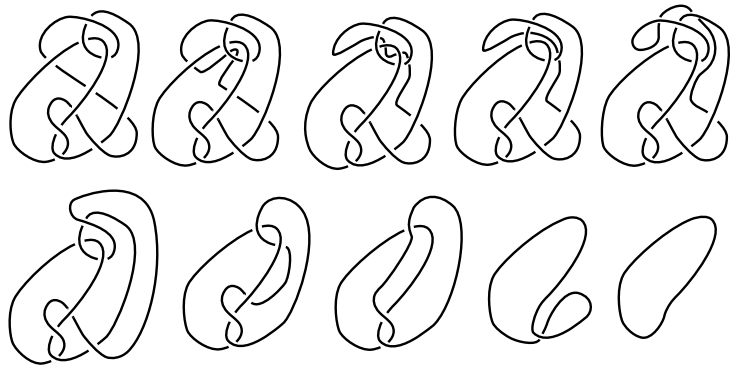}
\end{center}
\caption{The Culprit Undone}\label{unknotCulprit}
\end{figure}

\begin{figure}[h]
\begin{center}
\includegraphics[height=2in]{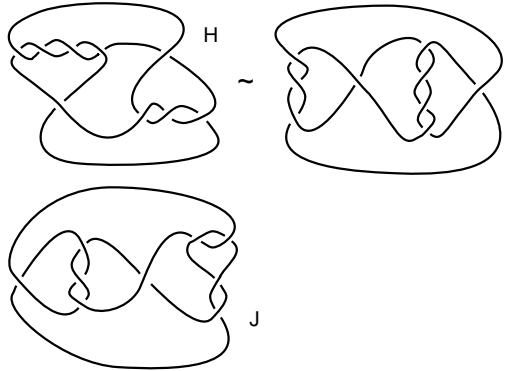}
\end{center}
\caption{The smallest hard unknots}
\end{figure}

\begin{figure}[h]
\begin{center}
\includegraphics[height=1in]{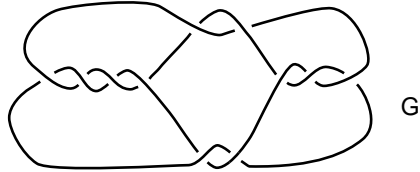}
\end{center}
\caption{The Goeritz unknot}
\end{figure}

In Figures 4 and 5 we indicate more examples of hard unknot diagrams. In Figure 4 we show the smallest possible such examples. In Figure 5 we show the very first such example, discovered by Goeritz  in 1934 ~\cite{Goeritz}. 
\bigbreak

At this point, we ask ourselves: how much more complicated does a diagram need to become before it can be simplified? Moreover, how many Reidemeister moves do we need to trivialize our picture? In this paper, we give a technique for finding upper bounds for these answers. In particular, we will prove the following theorem.
\bigbreak

\noindent {\bf Theorem 4.}
{\em Suppose $K$ is a diagram (in Morse form) of the unknot with crossing number $cr(K)$ and number of maxima $b(K)$. Let $M=2b(K)+cr(K)$. Then the diagram can be unknotted by a sequence of Reidemeister moves so that no intermediate diagram has more than $(M-2)^2$ crossings.}
\bigbreak

The definition of Morse form for a diagram will be given in the body of the paper.
In the case of our Culprit, we have  that $cr(K) = 10$ and $b(K) = 5. $ Thus $M = 20$ and 
$(M-2)^2 = 18^{2} = 324.$ In actuality we only needed a diagram with $12$ crossings in our unknotting
sequence. The theory of these bounds needs improvement, but it is, in fact, remarkable that there is a 
theory at all for such questions. Along with this theorem we will also give bounds on the number of 
Reidemeister moves needed for unknotting. We point the reader towards more results related to this question. As a disclaimer, we warn the reader that the difference between the lower bounds and upper bounds that are known is still vast. The quest for a satisfying answer to these questions continues.

\section{Preliminaries}

The method we present to find upper bounds makes use of a powerful result proven by Dynnikov in~\cite{dynnikov} regarding arc--presentations of knots. Here, we provide an overview of the theory of arc--presentations.  

\begin{definition}
An \emph{arc--presentation} of a knot is a knot diagram comprised of horizontal and vertical line segments such that at each crossing in the diagram, the horizontal arc passes under the vertical arc. Furthermore, we require that no two edges in an arc--diagram are colinear.

Two arc--presentations are \emph{combinatorially equivalent} if they are isotopic in the plane via an ambient isotopy of the form $h(x,y)=(f(x),g(y))$.

The \emph{complexity} $c(L)$ of an arc--presentation is the number of vertical arcs in the diagram.

We say more generally that a link diagram is {\em rectangular} if it has only vertical and horizontal edges.
In Figure~\ref{arc} we give an example of a rectangular diagram that is an arc--presentation and another example of a rectangular diagram that is not an arc--presentation.  
\end{definition}

Note that a rectangular diagram can naturally be drawn on a rectangular grid. If we start with such a grid and represent rectangular diagrams on the grid we have called these knots {\em mosaic knots} and 
used them to define a notion of {\em quantum knot.}  See ~\cite{Mosaic} for more about quantum knots.
For now, we focus our attention on arc--presentations.

\begin{figure}[h]
\begin{center}
\includegraphics[height=1.5in]{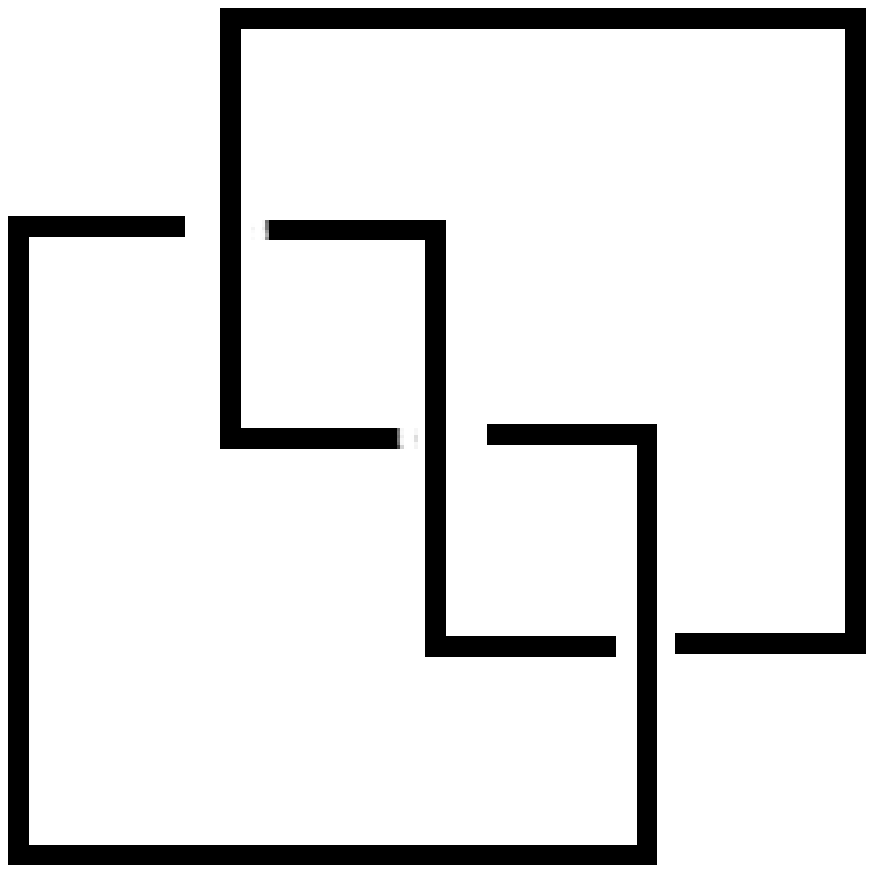}\includegraphics[height=1.5in]{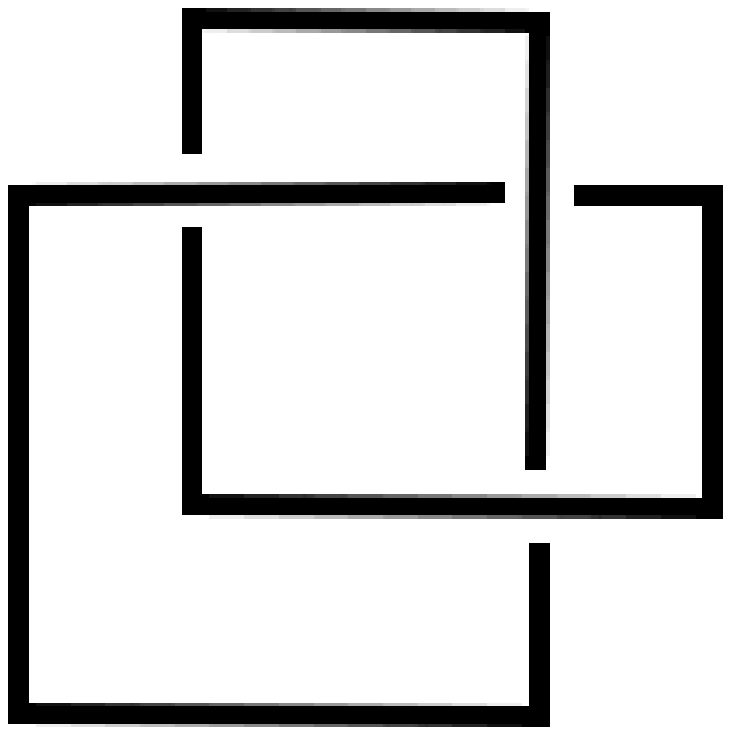}
\end{center}
\vspace{-.3in}
\caption{ \small{The picture on the left is an example of an arc--presentation of a trefoil. The picture on the right is an example that is \emph{not} an arc--presentation (since not all horizontal arcs pass under vertical arcs).}}
\label{arc}
\end{figure}

\begin{proposition}[Dynnikov]
Every knot has an arc--presentation. Any two arc--presentations of the same knot can be related to each other by a finite sequence of \emph{elementary moves}, pictured in Figures~\ref{stab} and~\ref{exch}.
\end{proposition}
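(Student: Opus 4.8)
The statement has two parts, and I would treat them in turn. For \textbf{existence}, my plan is to realize arc--presentations geometrically through the open book decomposition of $\mathbb{R}^3$ (or $S^3$) whose binding is the $z$--axis and whose pages are the half--planes $H_\theta$ hinged along it. An arc--presentation is precisely a knot lying in finitely many pages, one unknotted arc per page, with every arc--endpoint on the binding; reading off the pages in angular order and the endpoints by height along the axis recovers the planar picture of horizontal and vertical segments, the crossing convention (horizontal under vertical) reflecting radial distance to the binding. To produce such a presentation from an arbitrary knot $K$, I would start from any generic diagram, replace its arcs by a fine rectangular staircase so the diagram becomes rectangular, correct any crossing at which a horizontal arc passes over a vertical one by a small local modification, and finally perturb to remove collinearities. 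This yields an arc--presentation and settles the first sentence.

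For \textbf{equivalence}, suppose $L_0$ and $L_1$ are two arc--presentations of the same knot. They are ambient isotopic in $\mathbb{R}^3$, so I would fix an ambient isotopy $F_t$, $t \in [0,1]$, carrying $L_0$ to $L_1$, and then follow the knot $K_t = F_t(K)$ through the isotopy, recording how its position relative to the open book changes so as to break the continuous deformation into finitely many discrete combinatorial events. After perturbing $F_t$ to be generic with respect to the page foliation, $K_t$ is in arc--presentation position for all but finitely many $t$, and the combinatorial type of the presentation is locally constant away from these critical parameters.

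The heart of the argument, and the step I expect to be hardest, is the classification of the codimension--one events occurring at the critical parameters. I would argue, by a general--position and Cerf--theoretic analysis (this is where the braid--foliation technology of Birman--Menasco naturally enters), that each critical parameter realizes exactly one elementary transition: either an arc becomes tangent to a page so that a page is created or destroyed, which is a stabilization or destabilization of Figure~\ref{stab}, or two arcs momentarily occupy overlapping angular and height ranges and trade order, which is an exchange move of Figure~\ref{exch}. The delicate points are ruling out, by a further small perturbation of $F_t$, the more degenerate coincidences in which several such events happen simultaneously, and verifying that each surviving geometric event corresponds, up to combinatorial equivalence, to precisely one of the two pictured moves. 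Composing the moves read off at the successive critical parameters then relates $L_0$ to $L_1$, which completes the proof.
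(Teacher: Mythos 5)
Your existence argument is sound and essentially the same as the paper's: Lemma~\ref{arclemma} converts a Morse-form diagram into a rectangular diagram and then repairs the offending crossings (those with horizontal overpasses) by local rotations as in Figure~\ref{rotation}, which is exactly your staircase-plus-local-correction step. The open book picture you open with is the standard geometric interpretation of an arc--presentation, but it plays no logical role in this half.

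For the equivalence half you take a genuinely different route from the paper, and the difference matters. The paper, following Dynnikov~\cite{dynnikov}, is deliberately elementary: it invokes Reidemeister's theorem, which already packages the entire general-position/Cerf analysis of an ambient isotopy into three local planar moves, so that all that remains is a \emph{finite combinatorial check} --- that each Reidemeister move performed between rectangular diagrams factors through finitely many (de)stabilizations and exchange moves of Figures~\ref{stab} and~\ref{exch}, together with the planar isotopies implementing combinatorial equivalence. Your route instead redoes the transversality analysis from scratch inside the open book: perturb the isotopy $F_t$ to be generic with respect to the page foliation and classify the codimension-one events. This is viable --- it is essentially Cromwell's original approach to arc--presentations --- but as written the heart of your proof is asserted rather than proved: the claim that every generic critical parameter realizes exactly a (de)stabilization or an exchange \emph{is} the content of the theorem, and ``a general-position and Cerf-theoretic analysis'' is a placeholder for a substantial argument in which one must enumerate the tangencies of $K_t$ with the pages and the binding, rule out simultaneous degeneracies, and match each local model against the pictured moves. (Birman--Menasco braid-foliation machinery is also somewhat misplaced here; it is the tool behind the much harder monotonic simplification theorem, Theorem~\ref{triv}, not behind this basic moves theorem.) The trade-off is clear: your approach, if completed, yields the intrinsic three-dimensional picture directly and never mentions Reidemeister moves, while the paper's approach buys the hard analytic step for free from Reidemeister's theorem at the cost of a tedious but finite case check. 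To turn your sketch into a proof, supply that classification of critical events --- or shortcut it by reducing to Reidemeister moves as the paper does.
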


The proof of this proposition is elementary, based on the Reidemeister moves. A sketch is provided in~\cite{dynnikov}. We will show how to convert a usual knot diagram to an arc--presentation in the next few paragraphs, making use of the concept of Morse diagrams of knots.

\begin{figure}[h]
\begin{center}
\includegraphics[trim = 0mm 30mm 0mm 30mm, clip, height=1in]{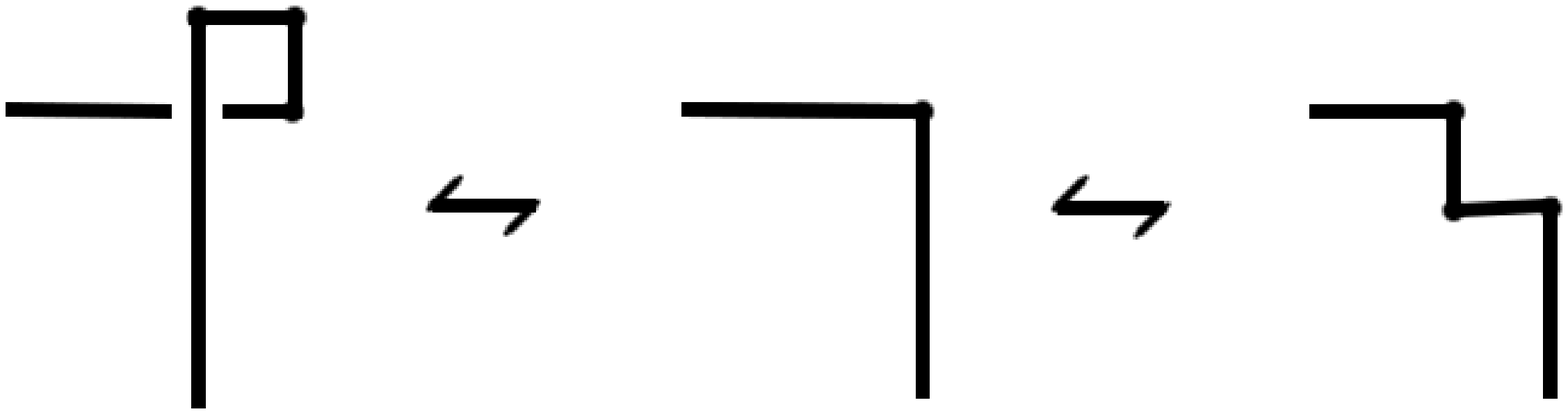}
\includegraphics[trim = 0mm 30mm 0mm 30mm, clip, height=1in]{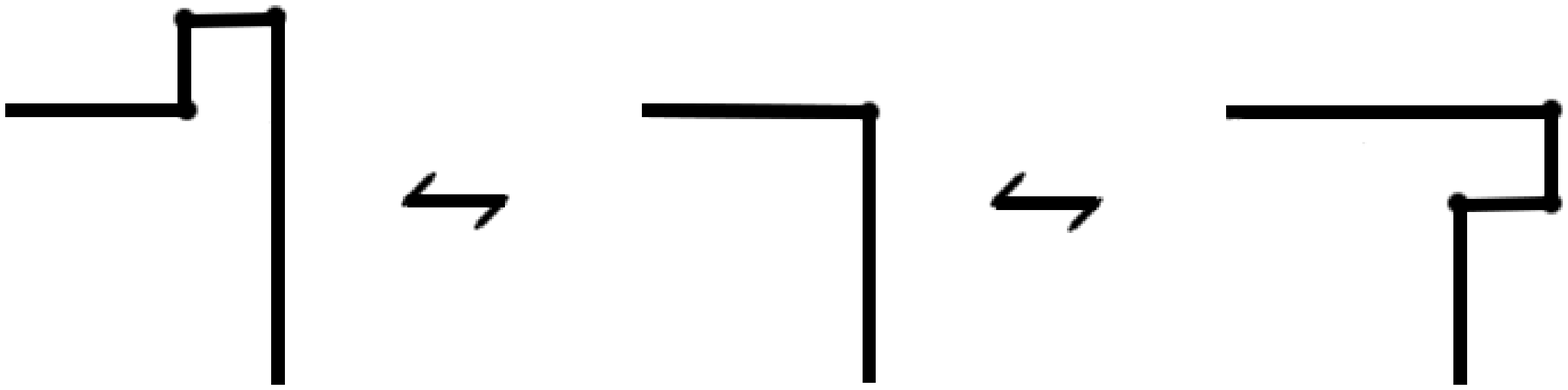}
\end{center}
\caption{ \small{Elementary (de)stabilization moves. Stabilization moves increase the complexity of the arc--presentation while destabilization moves decrease the complexity.}}
\label{stab}
\end{figure}

\begin{figure}[h]
\begin{center}
\includegraphics[trim = 0mm 30mm 0mm 30mm, clip, height=.9in]{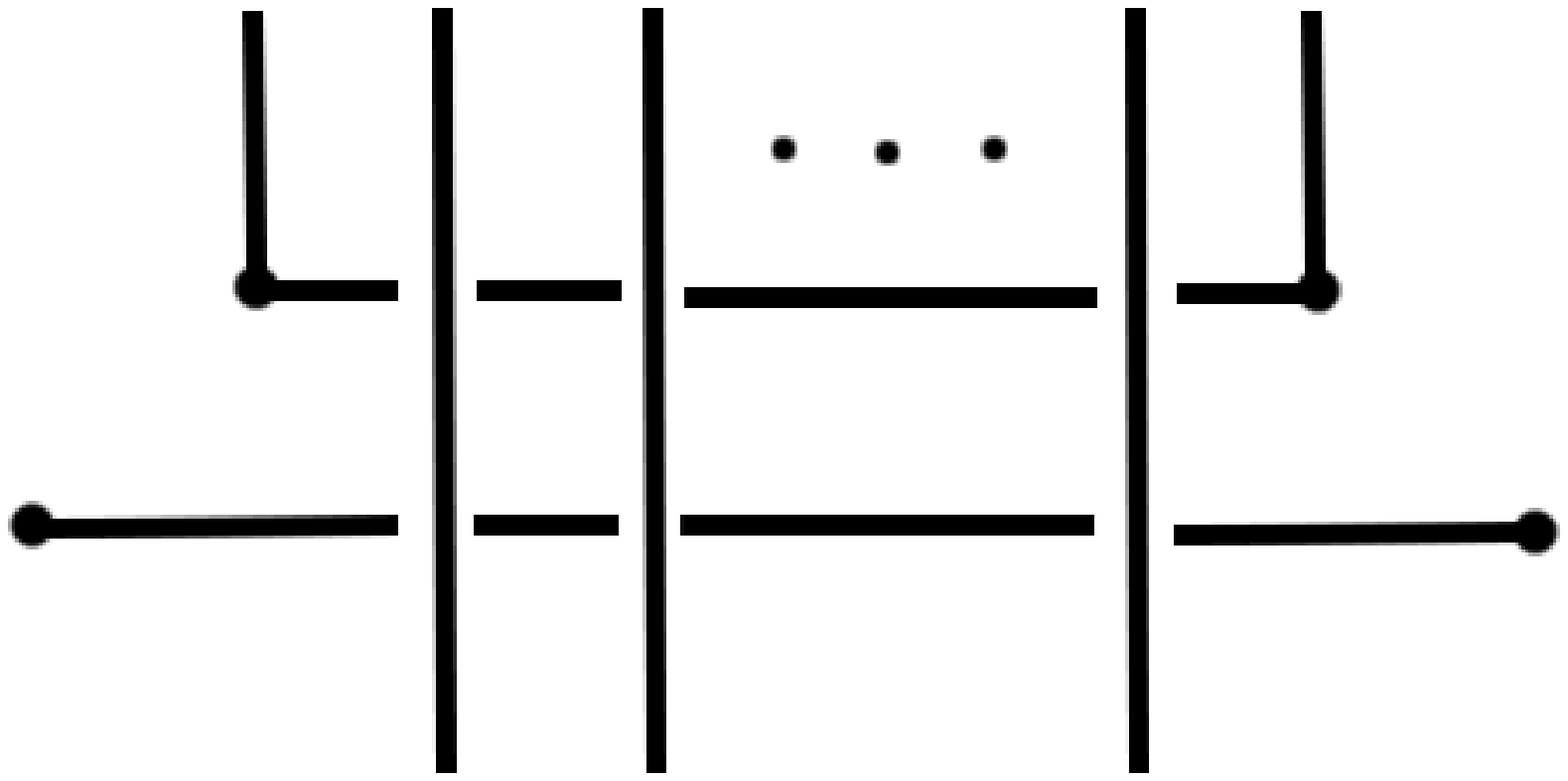}\includegraphics[trim = 0mm 30mm 0mm 30mm, clip, height=.9in]{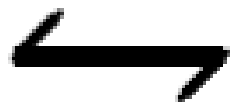}\includegraphics[trim = 0mm 30mm 0mm 30mm, clip, height=.9in]{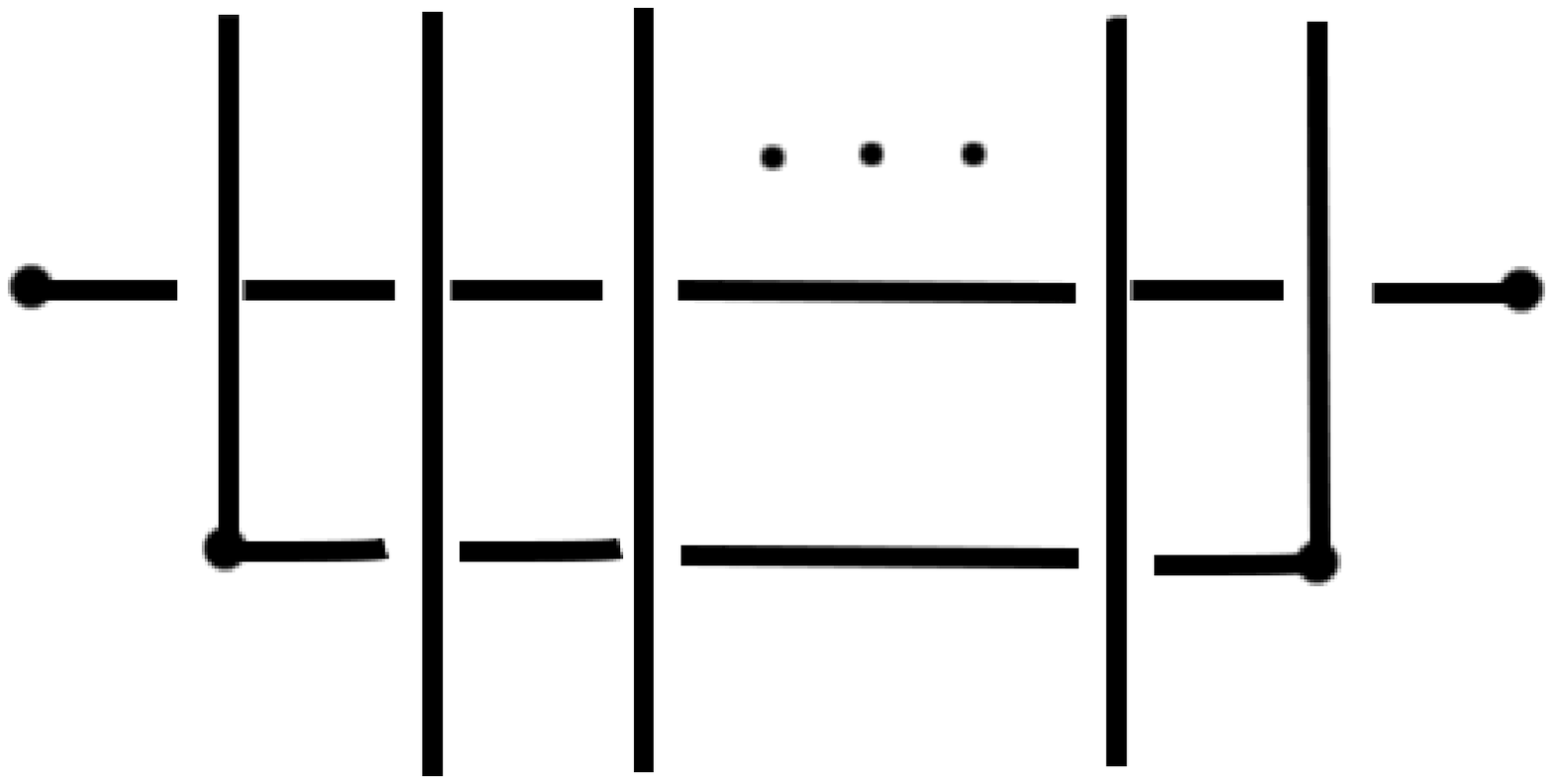}
\end{center}

\begin{center}
\includegraphics[trim = 0mm 30mm 0mm 30mm, clip, height=.9in]{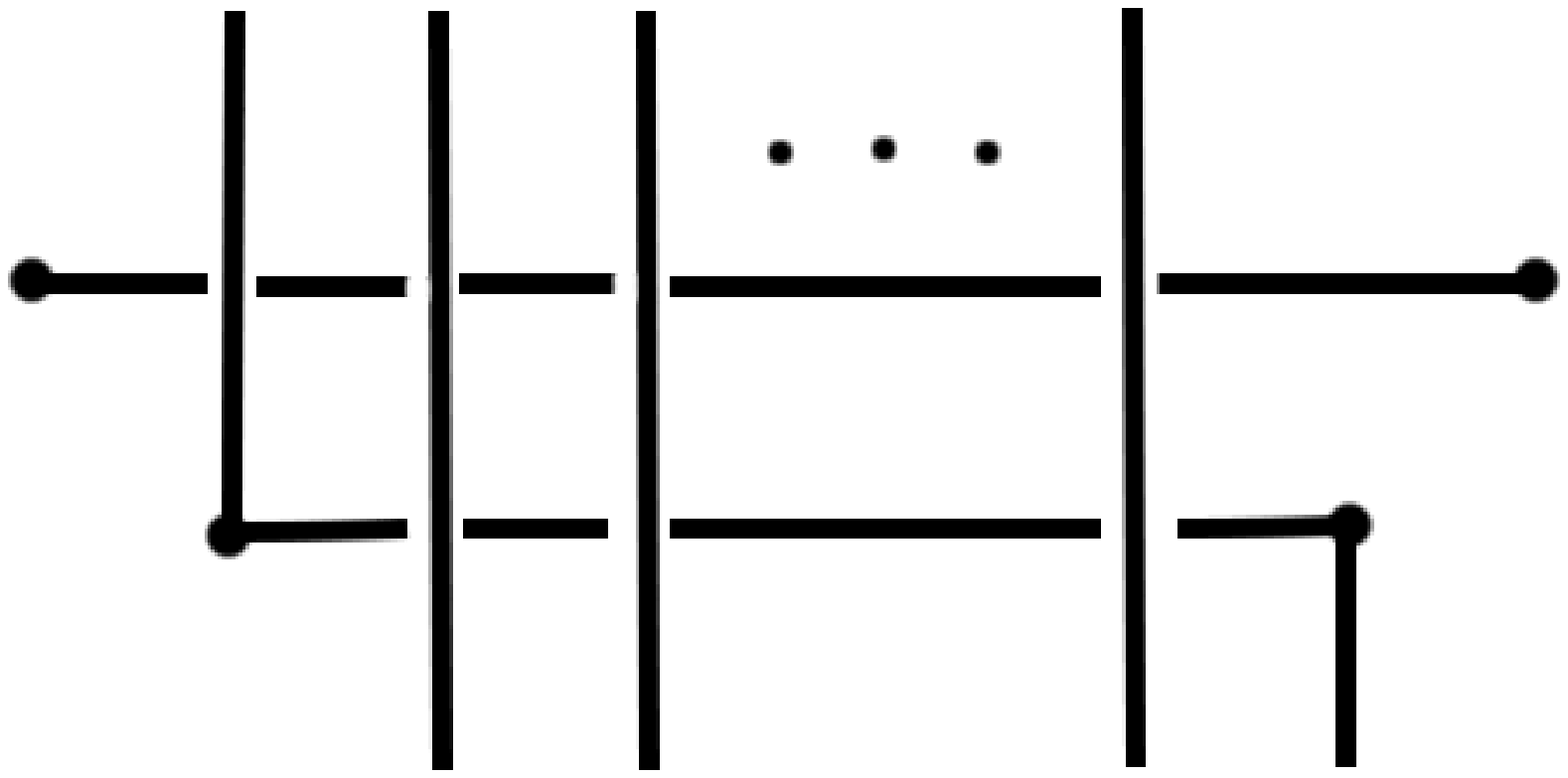}\includegraphics[trim = 0mm 30mm 0mm 30mm, clip, height=.9in]{arrow.eps}\includegraphics[trim = 0mm 30mm 0mm 30mm, clip, height=.9in]{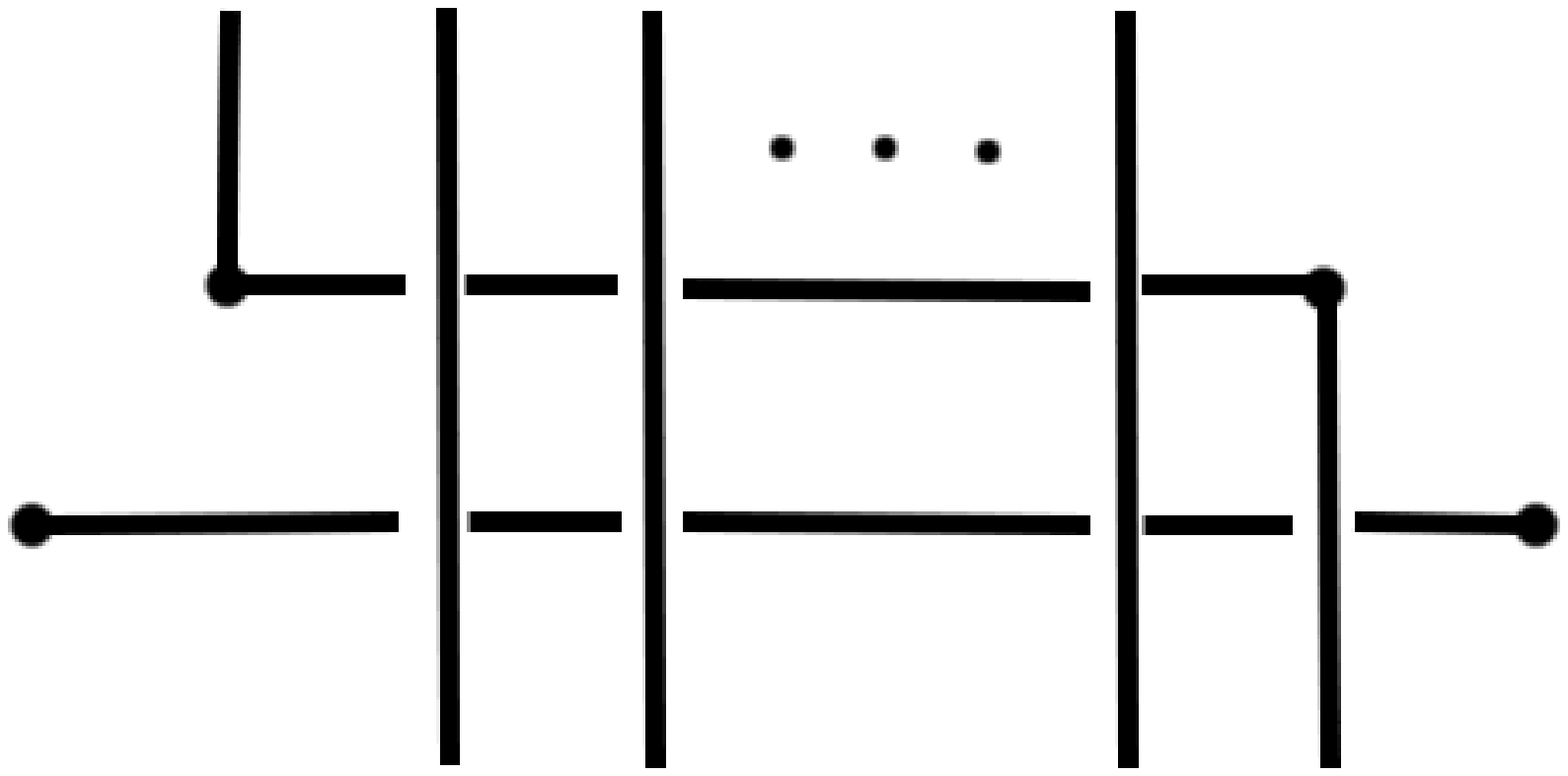}
\end{center}
\caption{ \small{Some examples of exchange moves. Other allowed exchange moves include switching the heights of two horizontal arcs that lie in distinct halves of the diagram.}}
\label{exch}
\end{figure}

\begin{definition} A knot diagram is in \emph{Morse form} if it has

\begin{enumerate}
\item no horizontal lines, 
\item no inflection points,
\item a single singularity at each height, and 
\item each crossing is oriented to create a 45 degree angle with the vertical axis.
\end{enumerate}
\end{definition}

We note that converting an arbitrary knot diagram into a diagram in Morse form requires no Reidemeister moves, only ambient isotopies of the plane. More information about Morse diagrams can be found in~\cite{lou}.

\begin{lemma}\label{arclemma}
Suppose a knot (or link) diagram $K$ in Morse form has $cr(K)$ crossings and $b(K)$ maxima. Then there is an arc--presentation $L_K$ of $K$ with complexity $c(L_K)$ at most $2b(K)+cr(K)$ that can be obtained by ambient isotopies of the plane (without the use of Reidemeister moves).
\end{lemma}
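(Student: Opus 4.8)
The plan is to show that the Morse diagram $K$ can be deformed, by an ambient isotopy of the plane alone, into a rectangular diagram meeting the arc--presentation conditions, and then to bound the number of vertical arcs by counting the \emph{corners} (the points where the curve turns from vertical to horizontal or vice versa) that the construction forces. Since an ambient isotopy of the plane is a homeomorphism, it neither creates nor destroys crossings and it preserves all over/under information, so no Reidemeister move is ever invoked; the content of the lemma is therefore the combinatorial bookkeeping of the arcs.

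First I would fix the local model at each singularity of the height function. Because $K$ is in Morse form, its singularities are the $b(K)$ maxima, the $b(K)$ minima (a closed curve has equally many of each), and the $cr(K)$ crossings, all at distinct heights. I then straighten the picture so that (i) each maximum becomes a horizontal cap $\sqcap$ and each minimum a horizontal cup $\sqcup$, (ii) the monotone strands running between singularities are drawn as vertical segments, and (iii) at each crossing the over--strand runs vertically and the under--strand runs horizontally. Condition (iii) is exactly the arc--presentation requirement that the horizontal arc pass beneath the vertical arc, and it can be imposed because a crossing in Morse form is transverse (indeed at $45^{\circ}$), so locally one strand may be rotated to the vertical and the other to the horizontal. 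After a final generic perturbation the vertical arcs sit at distinct $x$--coordinates and the horizontal arcs at distinct heights, with no two edges collinear, producing a genuine arc--presentation $L_K$.

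To bound the complexity $c(L_K)$, which is the number of vertical arcs, I would count corners. Traversing the closed curve, vertical and horizontal arcs alternate, so the number of arcs equals the number of corners and the number of vertical arcs is exactly half of that total. A cap and a cup each contribute two corners. At a crossing the over--strand passes straight through vertically and contributes no corner, while the under--strand must dip into a horizontal plateau in order to pass beneath, contributing at most two corners; a strand ascending monotonically from a minimum to a maximum is thus realized as an up--and--over staircase whose horizontal plateaus are precisely its under--passages. Consequently the total number of corners is at most $2b(K)+2b(K)+2cr(K)=4b(K)+2cr(K)$, whence the number of vertical arcs, and therefore $c(L_K)$, is at most $2b(K)+cr(K)$, as claimed.

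The step requiring the most care — the main obstacle — is verifying that assignment (iii) is globally consistent: one must simultaneously make the over--strand vertical at every crossing and the under--strand horizontal at every crossing while keeping each straight edge \emph{wholly} vertical or \emph{wholly} horizontal. The only possible conflict is a single edge that passes over one crossing and under another, and I would rule this out by placing a corner at the start and end of every under--passage, so that over--passages live on vertical (ascending or descending) edges and under--passages live on horizontal plateaus, each arc then being purely one type. What remains to confirm is that the resulting staircase is planar--isotopic to the original monotone arc, with the horizontal reaches arranged to respect the left--to--right order in which the strand meets the vertical arcs it passes under; this compatibility is precisely the flexibility that an ambient isotopy of the plane provides, and it is the point at which the Morse form, giving a clean height--ordered decomposition into monotone strands, does the real work.
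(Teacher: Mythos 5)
Your proof is correct and reaches the paper's bound, but by a genuinely different accounting than the paper's own argument. The paper first replaces the Morse diagram by a piecewise-linear diagram with edges of slope $\pm 1$ and vertices essentially only at the extrema, rotates it $45$ degrees to obtain a rectangular diagram of complexity at most $2b(K)$, and only then repairs the crossings that have horizontal overpasses by local $90$-degree rotations (Figure~\ref{rotation}), each costing $2$ in complexity; since naively this would give $2b(K)+2cr(K)$, the paper needs an extra trick --- rotate the entire diagram by $90$ degrees if necessary so that at most half the crossings are bad --- to bring the crossing term down to $cr(K)$. You instead impose the arc--presentation geometry from the outset (over-strands vertical, each under-passage a dedicated horizontal plateau) and count corners: $4b(K)$ from caps and cups plus at most $2$ per crossing, and since the vertical arcs make up exactly half the corners of a closed rectangular curve this yields $2b(K)+cr(K)$ directly, with no halving trick at all. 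Your charge of one unit of complexity per crossing (versus the paper's two units at no more than half the crossings) is arguably the cleaner bookkeeping, and your explicit attention to the global-consistency issue --- arranging that each straight edge is wholly an over-passer or wholly an under-passer --- addresses a point the paper leaves implicit. The one step you should still spell out, at the same level of care the paper itself takes, is that the staircase layout creates no \emph{incidental} horizontal--vertical intersections: in an arc--presentation every such intersection counts as a crossing, so a plateau sweeping from one vertical edge of its strand to the next must meet precisely the vertical arcs corresponding to crossings of the original diagram; this is where the Morse hypotheses (one singularity per height, hence distinct levels for the plateaus) and the flexibility of plane isotopy are genuinely used, exactly parallel to the paper's own unproved assertion that its local crossing rotations can be performed without disturbing the rest of the diagram.
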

\begin{proof}
We begin with a diagram in Morse form and convert this diagram into a piecewise linear diagram composed of lines with slope $\pm1$ with a vertex corresponding to each maximum and minimum (possibly with additional vertices---at most one for each pair of successive extrema). If we rotate this diagram by 45 degrees, we have a diagram composed entirely of horizontal and vertical arcs with complexity at most $2b(K)$. 

This diagram may fail to be an arc--presentation of $K$ if any crossing has a horizontal overpass. If more than half of the crossings in $K$ have horizontal overpasses, we rotate the diagram by 90 degrees. Now, at least half of the crossings are in the proper form. Any remaining crossings containing a horizontal overpass may locally be rotated 90 degrees to form our arc--presentation $L_K$, as shown in Figure~\ref{rotation}. For each crossing that requires this move, the complexity of the rectangular diagram increases by at most 2. Thus, the overall complexity of our diagram increases by at most $2(\frac{1}{2}cr(K))=cr(K)$. It follows that $c(L_K)\leq 2b(K)+cr(K)$.  

Note that neither converting a Morse diagram into a piecewise linear diagram nor locally rotating a crossing uses Reidemeister moves. These are ambient isotopies of the plane.
\end{proof}

\begin{figure}[h]
\begin{center}
\includegraphics[trim = 0mm 20mm 0mm 20mm, clip,height=1in]{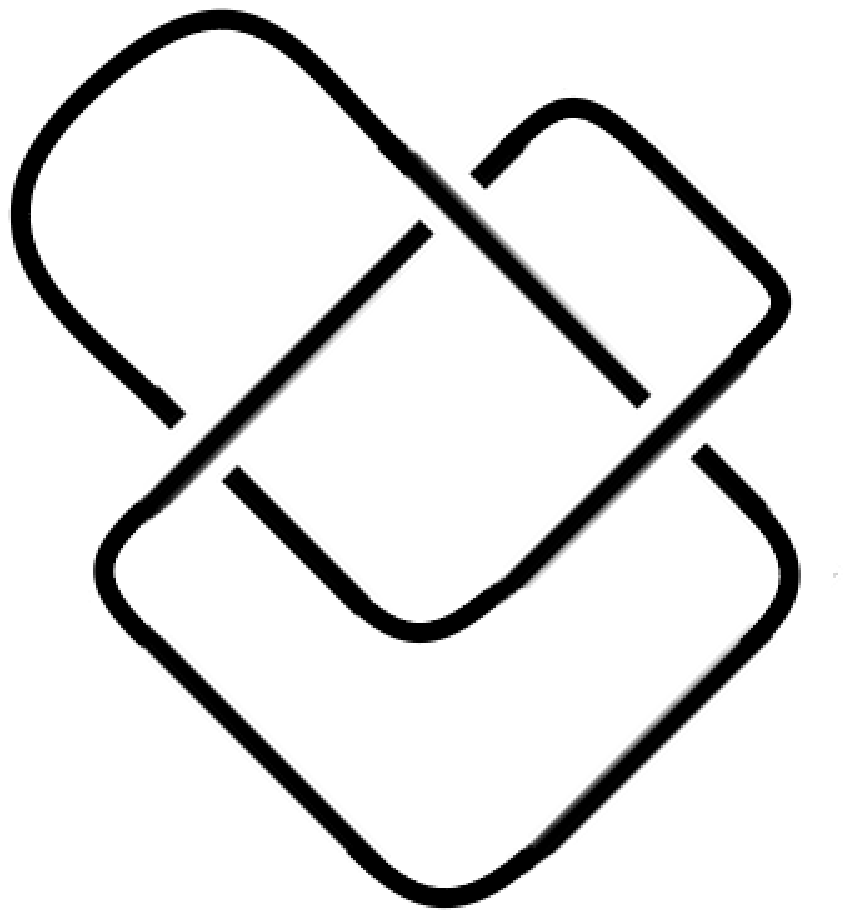}\includegraphics[trim = 0mm 20mm 0mm 20mm, clip,height=1in]{not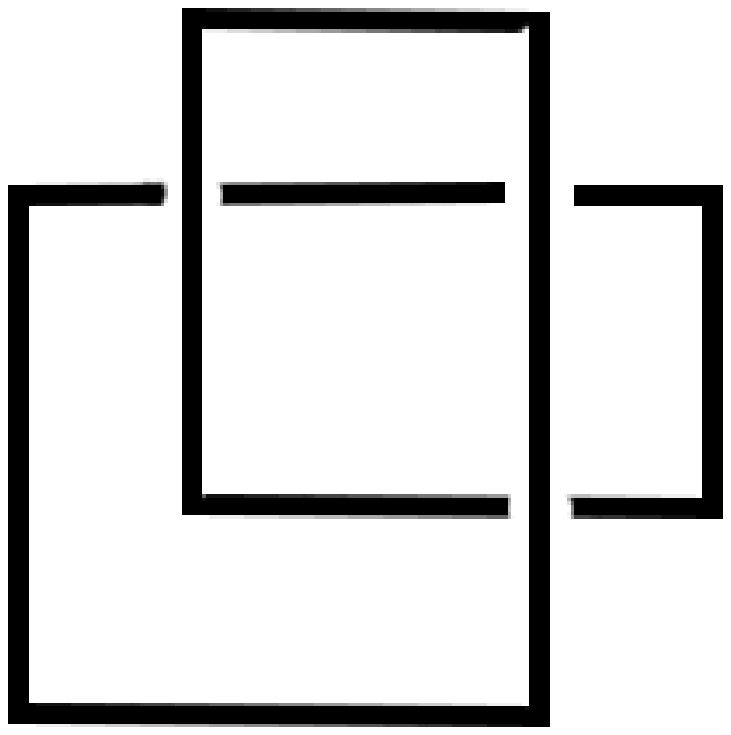}
\end{center}
\caption{ \small{A Morse diagram of a knot and a corresponding rectangular diagram.}}
\label{morse}
\end{figure}

\begin{figure}[h]
\begin{center}
\includegraphics[trim = 0mm 30mm 0mm 30mm, clip, height=.7in]{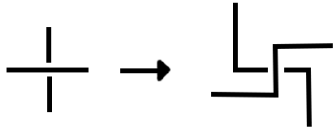}
\end{center}
\caption{ \small{Rotating a crossing to convert a rectangular diagram into an arc--presentation.}}
\label{rotation}
\end{figure}

\begin{figure}[h]
\begin{center}
\includegraphics[height=1.3in]{notarc.eps}\includegraphics[height=1.3in]{arrow.eps}\includegraphics[height=1.3in, angle=90]{notarc.eps}\includegraphics[height=1.3in]{arrow.eps}\includegraphics[height=1.3in]{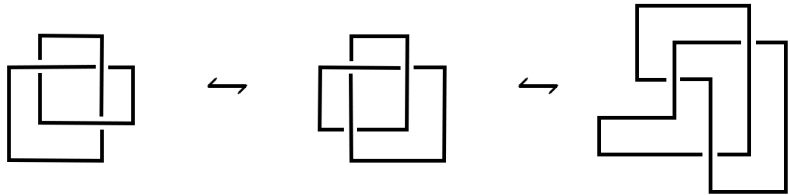}
\end{center}
\caption{ \small{Converting a rectangular diagram into an arc--presentation by rotating the diagram then rotating a crossing. Note that the resulting diagram can be reduced to the arc--presentation from Figure 1 with an exchange move that doesn't require any Reidemeister moves.}}
\label{morse}
\end{figure}

\section{Bounds on Crossings Needed to Simplify the Unknot}

Our motivation for using Dynnikov's work to find upper bounds for an unknotting Reidemeister sequence began with the following theorem from~\cite{dynnikov}.

\begin{theorem}[Dynnikov]\label{triv}
If $L$ is an arc--presentation of the unknot, then there exists a finite sequence of exchange and destabilization moves $$L\rightarrow L_1\rightarrow L_2\rightarrow
\cdots\rightarrow L_m$$ such than $L_m$ is trivial.\end{theorem}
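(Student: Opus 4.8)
The plan is to work in the open-book decomposition of $S^3$ underlying the arc-presentation: the binding is the vertical axis and the pages are the half-planes $H_\theta$ emanating from it. An arc-presentation $L$ of complexity $n=c(L)$ meets the binding in $n$ points and consists of $n$ arcs, exactly one lying in each of $n$ distinguished pages. In this language an exchange move either transposes two cyclically adjacent pages whose arcs do not interleave or transposes two adjacent binding points, and a destabilization merges two arcs sharing a binding endpoint into a single arc on one page; the first keeps $n$ fixed and the second lowers it, while a stabilization (which we must avoid) raises $n$. The goal is therefore to show that any unknotted $L$ admits a sequence of such non-increasing moves terminating at the one-arc trivial diagram.

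Because $L$ is the unknot it bounds an embedded disc $D$. First I would isotope $D$, rel $\partial D=L$, into a position generic with respect to the open book, so that the pages cut out on $D$ a singular foliation whose leaves are the slices $D\cap H_\theta$. After the standard normalization of such a foliation one may assume its only singularities are \emph{elliptic} points, where $D$ meets the binding transversally, and \emph{hyperbolic} points, where $D$ is tangent to a page; the boundary $L$ contributes the $n$ elliptic points on the binding together with the $n$ arcs as boundary leaves. An index count ties the numbers of elliptic and hyperbolic singularities to $\chi(D)=1$, and when the foliation carries no hyperbolic points the disc is a trivial spanning disc and $\partial D$ is already the trivial arc-presentation.

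The engine of the argument is an induction on a complexity measure of the foliation (essentially its number of singular points). I would argue that whenever $D$ is not already in trivial position one can locate either a cancelling elliptic--hyperbolic pair or an innermost simple configuration of leaves; cancelling such a pair corresponds on the boundary to a destabilization, while the boundary effect of merely reordering adjacent leaves or pages to expose such a pair corresponds to exchange moves. Each modification of $D$ is realized by an isotopy supported near a sub-disc, and one checks case by case that the induced boundary move is always an exchange or a destabilization, never a stabilization. Iterating drives the foliation, and hence the arc-presentation, down to the trivial one.

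The hard part will be the monotonicity: guaranteeing that progress can always be made \emph{without} first increasing complexity. It is entirely possible for a disc to admit no immediately cancellable pair, and one must show that a bounded amount of reshuffling by exchange moves always exposes a destabilization, and that a suitably chosen complexity function for the pair $(L,D)$ strictly decreases under the combined operation. Controlling this requires a delicate combinatorial analysis of the leaf structure of the foliation---ruling out the configurations in which every candidate move would force a stabilization---and this case analysis, rather than any single slick idea, is the real substance of the theorem.
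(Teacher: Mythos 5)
A point of context first: the paper you were asked to match does not actually prove this theorem --- it is quoted from Dynnikov~\cite{dynnikov}, with the proof deferred entirely to that paper. So the relevant comparison is with Dynnikov's original argument, and there your outline is on target in its architecture: Dynnikov does work with the open-book (``page'') structure underlying the arc-presentation, does span the unknot by an embedded disc, does analyze the singular foliation induced on the disc by the pages (elliptic points on the binding, hyperbolic points at tangencies with pages, with an Euler-characteristic index relation), and does realize modifications of the disc as exchange and destabilization moves on the boundary. You have correctly identified the framework and correctly identified where the difficulty lives.

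The problem is that you have identified the difficulty and then stopped: your final paragraph concedes that the monotonicity claim --- that a bounded amount of reshuffling by exchanges always exposes a destabilization, and that no configuration forces a stabilization --- is ``the real substance of the theorem,'' and you offer no argument for it. That concession is accurate, which is exactly why the proposal is not a proof. In Dynnikov's paper this step is the bulk of the work: one must normalize the foliation so that all leaves are arcs, set up a complexity function on the pair (disc, foliation), and carry out a genuinely intricate combinatorial analysis of the tile structure of the foliation, locating an innermost sub-disc whose singularities can be cancelled and verifying, configuration by configuration, that the induced boundary moves are generalized exchanges (which themselves must be shown to factor into elementary exchanges without raising complexity). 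Two smaller gaps compound this: you assert without justification that a hyperbolic-point-free foliation forces the trivial arc-presentation, and you do not explain why closed leaves and other degenerate leaf types can be removed without stabilizing. As it stands, the proposal is a correct and well-informed roadmap of Dynnikov's proof, but the step on which the theorem actually rests is missing.
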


What is particularly interesting about this result is that the unknot can be simplified \textbf{without increasing the complexity of the arc--presentation}, that is, without the use of stabilization moves. 
This gives a useful physical bound on how large a diagram can be. Furthermore, if we apply Dynnikov's
method to a knotted knot, it will stop on a diagram that is not a planar circle. Thus Dynnikov can
detect the unknot. 
\bigbreak

The problem of detecting the unknot has been investigated by many people. 
For example, the papers by Birman and Hirsch ~\cite{Alg}  and Birman and Moody ~\cite{Obstr} give such methods. More recently it has been shown that Heegard Floer Homology (a generalization of 
the Alexander polynomial) not only detects the unknot, but can be used to calculate the least genus of
an orientable spanning surface for any knot. This is an outstanding result and we recommend that the 
reader examine the paper by Manolescu, Oszvath, Szabo and Thurston ~\cite{Heegard} for more information. In that work, the Heegard Floer homology is expressed via a chain complex that is associated to a rectangular  diagram of just the type that Dynnikov uses. 
\bigbreak

Returning to the task at hand, we immediately derive a quadratic upper bound on the crossing number of diagrams in an unknotting sequence. A similar result can be found in~\cite{dynnikov}.

\begin{theorem}
Suppose $K$ is a diagram (in Morse form) of the unknot with crossing number $cr(K)$ and number of maxima $b(K)$. Then, for every $i$, the crossing number $cr(K_i)$ is no more than $(M-2)^2$ where $M=2b(K)+cr(K)$ and $K=K_0, K_1, K_2, ..., K_N$ is a sequence of knot diagrams such that $K_{i+1}$ is obtained from $K_i$ by a single Reidemeister move and $K_N$ is a trivial diagram of the unknot.
\end{theorem}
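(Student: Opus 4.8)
The plan is to combine the two results already available, Lemma~\ref{arclemma} and Dynnikov's Theorem~\ref{triv}, with a single purely combinatorial estimate bounding the number of crossings an arc--presentation of given complexity can carry, and then to promote Dynnikov's simplification into an honest Reidemeister sequence while keeping track of crossings at every stage.

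First I would isolate the key combinatorial fact: \emph{any arc--presentation of complexity $c$ has at most $(c-2)^2$ crossings.} Order the $c$ vertical arcs $v_1,\dots,v_c$ by their distinct $x$--coordinates and note that there are also exactly $c$ horizontal arcs, at $c$ distinct heights. A vertical arc $v$ can cross a horizontal arc only when that arc's height lies strictly between the heights of the two endpoints of $v$; since two of the $c$ heights are used by those endpoints, at most $c-2$ heights remain, so $v$ meets at most $c-2$ crossings. Moreover the leftmost arc $v_1$ and rightmost arc $v_c$ lie outside the horizontal span of every horizontal arc, hence carry no crossings at all. Thus only the $c-2$ interior vertical arcs contribute, each at most $c-2$ times, for a total of at most $(c-2)^2$.

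Next I would assemble the sequence. By Lemma~\ref{arclemma}, ambient isotopy of the plane (no Reidemeister moves) carries $K$ to an arc--presentation $L_K$ of complexity $c_0=c(L_K)\le M$; by the estimate above $cr(L_K)\le (c_0-2)^2\le (M-2)^2$, so we may set $K_0=K$ and identify it with $L_K$. Dynnikov's Theorem~\ref{triv} then supplies a sequence of exchange and destabilization moves $L_K=L^{(0)}\to L^{(1)}\to\cdots\to L^{(m)}$ terminating in the trivial diagram. Exchange moves preserve complexity and destabilizations lower it, so every $L^{(j)}$ has complexity at most $c_0\le M$ and therefore at most $(M-2)^2$ crossings.

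The remaining, and in my view hardest, step is to realize each elementary move $L^{(j)}\to L^{(j+1)}$ by finitely many Reidemeister moves while verifying that no intermediate diagram climbs above $(M-2)^2$ crossings, since a naive realization could temporarily overshoot. I would treat the two move types separately. A destabilization is a local simplification, implementable by a single Reidemeister I or II move that only removes crossings, so no intermediate exceeds $cr(L^{(j)})\le(M-2)^2$. The exchange moves are the delicate case, and I would argue from the finite list of local configurations in Figure~\ref{exch}: when the two interchanged arcs have disjoint spans the swap is a planar isotopy changing no crossing, and in the nested configurations it can be carried out by Reidemeister II and III moves through diagrams that are themselves rectangular of complexity at most $M$, so the $(c-2)^2$ estimate continues to apply. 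Concatenating the Reidemeister realizations of all $m$ elementary moves produces a sequence $K_0,K_1,\dots,K_N$ with $K_N$ trivial and $cr(K_i)\le(M-2)^2$ throughout. The crux is exactly this last verification for exchange moves, namely that they admit crossing--controlled Reidemeister realizations rather than ones that momentarily break the bound.
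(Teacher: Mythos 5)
Your overall architecture is exactly the paper's: pass to an arc--presentation via Lemma~\ref{arclemma}, invoke Dynnikov's monotonic simplification (Theorem~\ref{triv}), bound the crossings of any arc--presentation of complexity $c$ by $(c-2)^2$, and then factor each exchange or destabilization move through Reidemeister moves while controlling crossings. Your explicit proof of the $(c-2)^2$ estimate (endpoint heights exclude two of the $c$ levels, and the extreme left and right vertical arcs carry no crossings) is a welcome addition, since the paper only asserts this as ``straightforward to show.'' But there is a genuine gap at precisely the step you flag as the crux. To control crossings during an exchange move you claim the intermediate diagrams ``are themselves rectangular of complexity at most $M$'' and reapply the lattice estimate. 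This is not right as stated: after the type II move that begins the factoring (Figure~\ref{factor}), the partially exchanged horizontal arc runs at its old height over part of its span and at its new height over the rest, joined by a jog, so any rectangular realization of the intermediate diagrams has at least one extra vertical edge, i.e.\ complexity up to $M+1$ or more. Your estimate then yields only $(M-1)^2$ (or worse), overshooting the claimed bound $(M-2)^2$. (The fact that intermediates fail to be arc--presentations is harmless, since your counting argument is purely geometric, but the complexity claim itself does not hold.)

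The repair is the paper's direct crossing bookkeeping, which never re-measures the complexity of intermediate diagrams: each exchange move factors through at most one type II Reidemeister move together with some type III moves; type III moves preserve the crossing number, and the type II move is needed exactly when the exchange itself changes the crossing count by two --- it comes first when the exchange adds two crossings and last (as the simplifying inverse) when it removes two. Hence every intermediate diagram has at most $\max\{cr(L^{(j)}),cr(L^{(j+1)})\}$ crossings, and since both endpoints are arc--presentations of complexity at most $M$, the bound $(M-2)^2$ holds throughout the whole Reidemeister sequence. One further minor slip, harmless to the bound: a destabilization is realized by a single simplifying Reidemeister I move or by a planar isotopy, never by a Reidemeister II move.
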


\begin{proof}
To begin, we notice that $K$ can be viewed as an arc--presentation of complexity $M$ by a simple ambient isotopy of the plane, as shown in Lemma~\ref{arclemma}. By Theorem~\ref{triv}, there is a sequence of arc--presentations beginning with $K$ and ending with the trivial arc--presentation each having complexity no more than $M$ such that a diagram and its successor are related by an exchange or a destabilization move. Each destabilization move either preserves or reduces the number of crossings in the diagram. In the case that a destabilization move reduces the number of crossings, it can be viewed as a simplifying Redemeister I move. Otherwise, it can be viewed as a simple ambient isotopy of the plane. 

When an exchange move is performed, on the other hand, its analogous Reidemeister sequence may require type II and type III Reidemeister moves. (See Figure~\ref{factor}.) At most one type II Reidemeister move is required for any given exchange move, so an exchange move factors through a Reidemeister sequence of moves that adds at most two crossings (since type III moves preserve the crossing number). However, it is important to note that a Reidemeister II move is needed if and only if the exchange move itself increases the number of crossings by two in the arc--presentation. Thus, no more crossings are added when factoring an exchange move through a Reidemeister sequence than are added in the exchange move itself.

\begin{figure}[h]
\begin{center}
\includegraphics[height=1.3in]{exch2.eps}\includegraphics[height=1.2in]{arrow.eps}\includegraphics[height=1.3in]{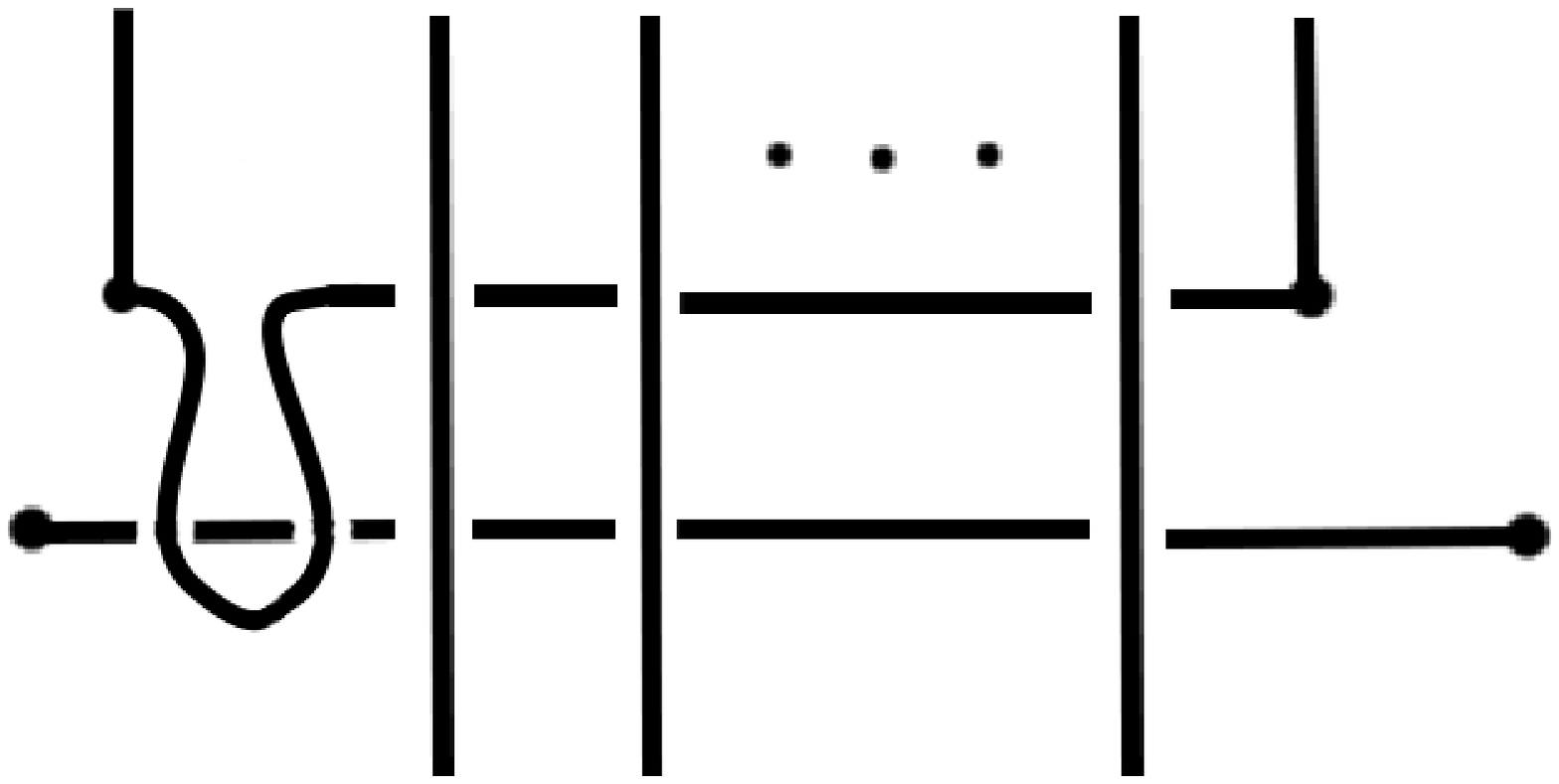}
\includegraphics[height=1.2in]{arrow.eps}
\vspace{-.3in}
\includegraphics[height=1.3in]{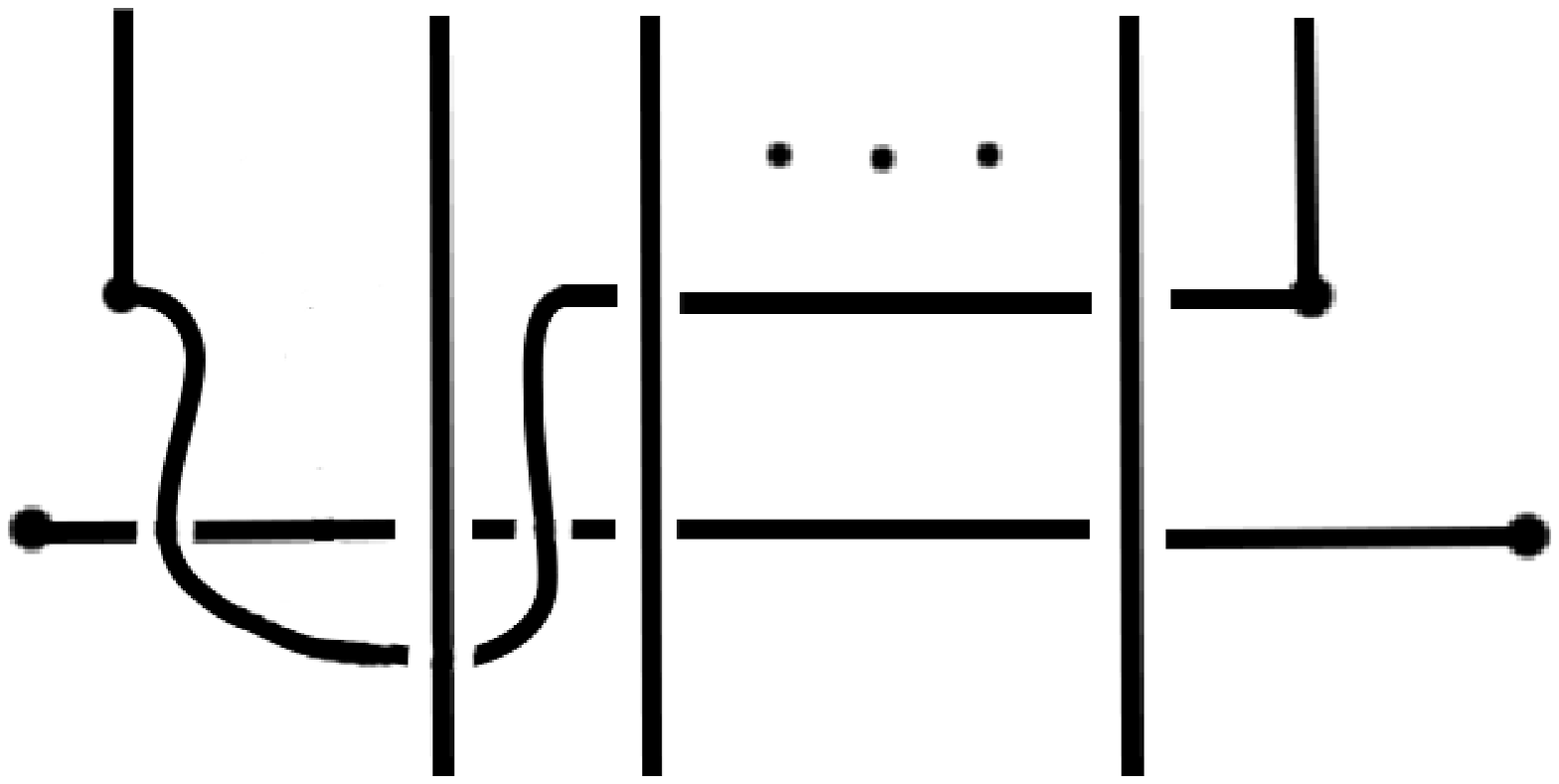}\includegraphics[height=1.2in]{arrow.eps}\includegraphics[height=1.2in]{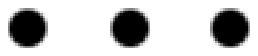}\includegraphics[height=1.2in]{arrow.eps}\includegraphics[height=1.4in]{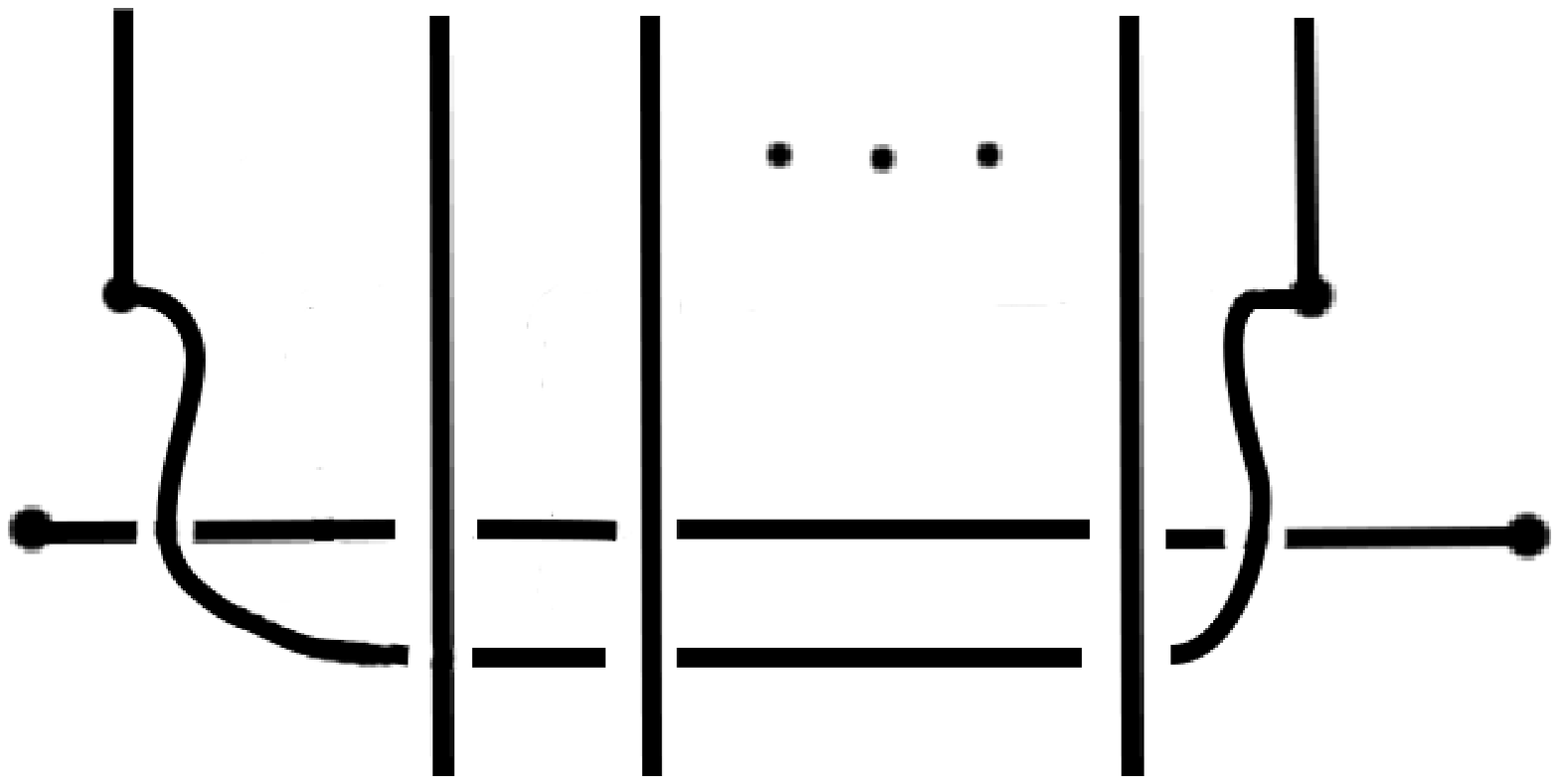}
\vspace{-.2in}
\includegraphics[trim = 0mm 20mm 0mm 0mm, clip,height=1.2in]{arrow.eps}\includegraphics[height=1.3in]{exch1.eps}
\end{center}
\caption{ \small{Factoring an exchange move through a type II and multiple type III Reidemeister moves.}}
\label{factor}
\end{figure}

It is straightforward to show that the maximum number of crossings that may occur in an arc-presentation with complexity less than or equal to $M$ is bounded above by $(M-2)^2$. If we translate an arc--presentation sequence of moves in a canonical fashion into a sequence of Reidemeister moves to unknot our unknot, many knot diagrams in the Reidemeister sequence will be arc--presentations and, as such, will have fewer than $(M-2)^2$ crossings. Furthermore, diagrams in this sequence that are not arc--presentations have no more crossings than their arc--presentation relatives. Thus, there exists a sequence of Reidemeister moves that unknots our original diagram $K$ that does not increase the crossing number to more than $(M-2)^2.$
\end{proof}

\section{Bounds on Reidemeister Moves Needed to Simplify the Unknot}

To find our upper bound on the number of Reidemeister moves, we must first specify an upper bound on the number $m$ of exchange and destabilization moves required to trivialize an arc--presentation. This bound will depend on the complexity $c(L)=n$ of the arc--diagram $L$. We also must provide an upper bound on the number of Reidemeister moves required for a destabilization or exchange move.

In~\cite{dynnikov}, Dynnikov provides the following bounds on the number of combinatorially distinct arc--presentations of complexity $n$. 

\begin{proposition} Let $N(n)$ denote the number of combinatorially distinct arc-- presentations of complexity $n$. Then the following inequality holds.
$$N(n)\leq \frac{1}{2}n [(n-1)!]^2$$
\end{proposition}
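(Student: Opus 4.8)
The plan is to encode each combinatorial arc-presentation of complexity $n$ by a purely combinatorial object --- an alternating cyclic incidence pattern between its vertical and horizontal arcs --- and then to count these objects, taking care that the normalization used to kill the rotational and reflectional symmetries yields an inequality in the correct direction. First I would record the basic structure. By definition such a presentation has $n$ vertical arcs; since the edges of a rectangular diagram alternate in direction as one traverses the knot (two consecutive colinear edges being forbidden), it has exactly $n$ horizontal arcs as well. Because combinatorial equivalence permits any homeomorphism of the form $h(x,y)=(f(x),g(y))$, I may normalize the $x$-coordinates of the vertical arcs to be $1,2,\dots,n$ and the $y$-coordinates of the horizontal arcs to be $1,2,\dots,n$. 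I would then argue that the whole diagram --- including which arcs cross and the fact that the vertical arc is always the overpass --- is determined by the incidence data recording, for each corner, which vertical arc meets which horizontal arc: the top and bottom endpoints of a vertical arc are distinguished by the heights of its two incident horizontal arcs, the left and right endpoints of a horizontal arc by the columns of its two incident vertical arcs, and a vertical arc crosses a horizontal arc precisely when their normalized coordinate ranges interleave. Thus the combinatorial type is faithfully captured by this incidence data.

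Next I would identify the incidence data with a graph: each vertical arc meets exactly two horizontal arcs and conversely, so the incidence is a $2$-regular bipartite graph on the $n$ vertical and $n$ horizontal arcs, and connectivity (the knot being a single component) forces it to be a single cycle of length $2n$ alternating $v,h,v,h,\dots$. Reading this cycle off yields a cyclic word $v_{i_1}h_{j_1}v_{i_2}h_{j_2}\cdots v_{i_n}h_{j_n}$ in which $(i_1,\dots,i_n)$ and $(j_1,\dots,j_n)$ are each permutations of $\{1,\dots,n\}$. By the previous paragraph the assignment of this cyclic word to an equivalence class is injective.

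Finally I would count. To kill the cyclic rotation I normalize so that the vertical arc $v_1$ in the first column appears first; this is unambiguous since $v_1$ occurs exactly once, and it turns the cyclic word into a linear one $v_1 h_{a_1} v_{b_2} h_{a_2} \cdots v_{b_n} h_{a_n}$. To kill the reflection (reversing the direction of traversal gives the same unoriented diagram) I impose the condition $a_1<a_n$ on the two horizontal neighbors of $v_1$; since reversal interchanges $a_1$ and $a_n$, which are distinct, exactly one of the two directions satisfies this condition, so the reflection acts as a free involution and every class has a unique normalized representative. The number of normalized words is then at most the number of orderings $(b_2,\dots,b_n)$ of $\{2,\dots,n\}$ times the number of orderings $(a_1,\dots,a_n)$ of $\{1,\dots,n\}$ with $a_1<a_n$, namely $(n-1)!\cdot\tfrac12 n! = \tfrac12 n\,[(n-1)!]^2$. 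Since distinct classes give distinct normalized words, $N(n)\le\tfrac12 n\,[(n-1)!]^2$.

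I expect the main obstacle to be the bookkeeping around symmetry: one must ensure that the reduction by rotation (the factor $n$) and by reflection (the factor $2$) produces an upper bound rather than a lower bound, which is precisely why I phrase the argument as an injection into an explicitly normalized set of words rather than as an orbit count --- a Burnside-style orbit count would give an inequality in the wrong direction. A secondary point to verify carefully is that the normalized coordinates together with the incidence cycle genuinely recover the crossing information, so that the encoding is injective, and that the equality between the number of horizontal and vertical arcs holds, which is where the non-colinearity hypothesis in the definition of an arc-presentation enters.
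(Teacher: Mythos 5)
Your argument is correct and is in essence the same count as the paper's: after normalizing the rows and columns to $\{1,\dots,n\}$, tracing the knot as an alternating cyclic sequence of vertical and horizontal arcs yields $[(n-1)!]^2$ choices of the two permutations, with a factor of $\tfrac{1}{2}n$ absorbing the $2n$-fold ambiguity of starting corner and traversal direction (the paper phrases this as $\tfrac{n^2}{2n}$ choices of starting lattice point). The only real difference is presentational rigor: where the paper informally divides out the symmetry, you realize the quotient as an explicit injection into normalized words via the conditions ``start at $v_1$'' and $a_1<a_n$, which cleanly settles the direction-of-inequality concern that the paper's division argument leaves implicit.
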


\begin{proof}
Suppose we want to create an arc--presentation on the $n\times n$ integer lattice. Let us choose a starting point in the lattice. There are $\frac{n}{2}=\frac{n^2}{2n}$ ways to choose this point since there are $n^2$ lattice points, $2n$ of which lie on a given diagram. From this point, we create a vertical arc ending at another point in the integer lattice. There are $n-1$ choices for this endpoint. From our new point, we want to create a horizontal arc with endpoint in the lattice. There are $n-1$ choices for this endpoint as well. Next, we make another vertical arc, choosing one of the $n-2$ possible endpoints. (There are only $n-2$ choices since no two arcs in the diagram should be colinear.) Similarly, we have $n-2$ choices for the endpoint of our next horizontal arc. Continuing in this fashion, we see that the number of distinct choices we must make is $[(n-1)!]^2$.
Multiplying this quantity by $\frac{n}{2}$ to account for the initial choice of starting point, we get $\frac{1}{2}n[(n-1)!]^2$
\end{proof}

Using this count on the number of distinct arc--presentations of a given size, we can find a bound (albeit a large one) on the number of arc--presentation moves we need. This is simply by virtue of the fact that any reasonable sequence of moves will contain mutually distinct arc--presentations that don't exceed the complexity of the original, and there are a limited number of such diagrams.

\begin{lemma}\label{size}
The number of terms, $m$, in the monotonic simplification of arc--presentation $L$ with $c(L)=n$ is bounded above by $\sum_{i=2}^n\frac{1}{2}i[(i-1)!]^2$.
\end{lemma}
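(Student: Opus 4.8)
The plan is to combine the monotonicity supplied by Theorem~\ref{triv} with the enumeration bound of the preceding Proposition through a pigeonhole argument. First I would invoke Theorem~\ref{triv} to fix a sequence of exchange and destabilization moves $L=L_0\to L_1\to\cdots\to L_m$ ending in the trivial diagram. Along this sequence the complexity never increases: exchange moves preserve $c$, while destabilization moves decrease it. Since $c(L_0)=n$ and the terminal trivial arc--presentation has complexity $2$, the values $c(L_0)\ge c(L_1)\ge\cdots\ge c(L_m)$ form a non--increasing chain in $\{2,3,\dots,n\}$, so all terms of a fixed complexity $i$ occur as a single consecutive block, and once the sequence leaves level $i$ it never returns.

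The crucial reduction is to arrange that no arc--presentation is repeated. If $L_j$ and $L_k$ with $j<k$ were combinatorially equivalent, then by monotonicity $c(L_j)=c(L_{j+1})=\cdots=c(L_k)$, so every intermediate term lives at the same complexity level; I could then excise $L_{j+1},\dots,L_k$ and splice $L_j$ directly to $L_{k+1}$ using the same elementary move that originally carried $L_k$ (combinatorially equivalent to $L_j$) to $L_{k+1}$. The result is again a legal, still monotonic, but strictly shorter simplification. Iterating this excision until no repeats remain produces a monotonic simplification in which all terms are pairwise combinatorially distinct, and whose length is therefore an upper bound for the quantity we wish to estimate.

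With distinctness secured, the count is immediate. I would partition the terms by complexity: the terms of complexity exactly $i$ are pairwise combinatorially distinct arc--presentations of complexity $i$, so by the Proposition there are at most $N(i)\le\tfrac{1}{2}i[(i-1)!]^2$ of them. Summing over the attainable values $i=2,3,\dots,n$ yields
\[
m\;\le\;\sum_{i=2}^{n}N(i)\;\le\;\sum_{i=2}^{n}\tfrac{1}{2}i\,[(i-1)!]^2,
\]
which is the claimed bound.

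I expect the main obstacle to be the loop--excision step rather than the counting. One must check carefully that deleting a repeated block genuinely leaves a valid sequence of elementary moves and preserves the non--increasing complexity property; this is exactly the point at which the monotonicity of Theorem~\ref{triv} is indispensable, since without it a repeated diagram could straddle an intermediate rise in complexity and the splice would not be legitimate. Once the terms at each level are known to be distinct, applying the Proposition level by level and summing is routine.
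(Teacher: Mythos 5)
Your proof is correct and takes essentially the same route as the paper: both arguments bound $m$ by the total number of combinatorially distinct arc--presentations of complexity between $2$ and $n$, namely $\sum_{i=2}^{n}N(i)\leq\sum_{i=2}^{n}\frac{1}{2}i[(i-1)!]^2$, using the enumeration Proposition together with the fact that the simplification of Theorem~\ref{triv} never raises complexity. The only difference is that the paper simply asserts the terms of the sequence are pairwise combinatorially distinct, whereas you justify this with an explicit loop--excision (passing to a repeat--free monotonic simplification) --- a gap worth filling, and your splice is legitimate precisely because exchange and destabilization moves keep the complexity non--increasing.
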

\begin{proof}
Suppose that an arc--presentation $L$ has complexity $n$. Since each $L_k$ from Theorem~\ref{triv} is combinatorially distinct from any other $L_j$ with $k\neq j$, we know that the number $m$ of arc--presentations in the sequence must be at most $\sum_{i=2}^nN(i)$ which is no greater than $\sum_{i=2}^n\frac{1}{2}i[(i-1)!]^2$. 
\end{proof}

We should note that, if we start with an arc--presentation of the unknot, every arc--presentation in our simplification sequence must be a diagram of the unknot. As $n$ gets larger, we recognize that far fewer arc--presentations of complexity $n$ are unknots. Thus, in practice, $m$ will be much lower than the upper bound provided here. The authors would be interested to know what the probability is that an arc--presentation of complexity $n$ is the unknot. Using this probability, we could tighten the upper bound we found above.

We return now to our second question: how many Reidemeister moves does it take to make an arc--presentation move?

\begin{lemma}\label{reid}
No more than $n-2$ Reidemeister moves are required to perform an exchange or destabilization move on an arc--presentation $L$ with complexity $c(L)=n$.
\end{lemma}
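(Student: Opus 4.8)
The plan is to split the argument according to the two kinds of arc--presentation moves and to dispose of the destabilization case first, since it is immediate. A destabilization (Figure~\ref{stab}) removes a single vertical arc together with its adjacent horizontal arc at a corner of the diagram. As was already observed in the proof of the preceding crossing--bound theorem, such a move is realized either by a single simplifying Reidemeister~I move, when the corner carries a crossing, or by a pure planar isotopy, when it does not. In either case at most one Reidemeister move is used. Since $1\le n-2$ whenever $n\ge 3$, and since a complexity--$2$ arc--presentation is already the trivial square on which no destabilization is performed, the bound $n-2$ holds for every destabilization; I would record the small--$n$ boundary cases explicitly so that the statement is literally true for all admissible $n$.

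The substance of the lemma lies in the exchange move, which I would realize exactly as in Figure~\ref{factor}: one slides the moving arc across the portion of the diagram lying between its old and new positions. The first point to record is that this factorization consists of \emph{at most one} type~II Reidemeister move, needed precisely when the exchange changes the crossing number by two, together with a succession of type~III moves, one for each strand that the sweeping arc must be pushed past. The problem then reduces to a purely combinatorial count: bounding the number of strands that the moving arc sweeps through during a single exchange.

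Here is where the real work lies. Taking the exchange to carry a horizontal arc $e$ past an adjacent horizontal arc, I would let $S$ denote the set of vertical arcs that $e$ must cross as it sweeps to its new height. The arc $e$ is attached at its two endpoints to two vertical arcs, and these travel rigidly with $e$ and are therefore never crossed; hence $S$ omits at least those two of the $n$ vertical arcs of $L$, giving $|S|\le n-2$. Each element of $S$ contributes exactly one crossing across which $e$ must be pushed, and pushing $e$ past one such crossing, with the stationary arc playing the role of the third strand, is a single type~III move. Assembling these observations, together with the fact that the (at most one) type~II move is the only move that alters the crossing number, should yield a total of at most $n-2$ Reidemeister moves. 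The symmetric case of an exchange of two vertical arcs, and the case of arcs lying in distinct halves of the diagram noted after Figure~\ref{exch}, are handled by the identical sweeping argument after a $90^\circ$ rotation, so they require no separate count.

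The main obstacle I anticipate is precisely this bookkeeping, and in particular avoiding an off--by--one error in the constant. One must confirm against the explicit local pictures of Figure~\ref{factor} that every passage of the sweeping arc past an interior strand is genuinely a single type~III move, and, more delicately, that the single type~II move does not push the total from $n-2$ up to $n-1$; that is, one must check that the type~II move resolves a strand already accounted for in the count $|S|\le n-2$ rather than contributing an extra move on top of the type~III passages. The cleanest way to settle this is to verify directly that in the crossing--increasing case the type~II move handles the outermost of the swept strands while the type~III moves handle the remaining $|S|-1$, whereas in the crossing--preserving case all $|S|$ passages are type~III; in both cases the move count equals $|S|\le n-2$. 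Confirming this split carefully from the figures is the crux of the proof.
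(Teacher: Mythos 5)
Your decomposition is exactly the paper's: destabilization costs at most one type I move, and an exchange factors through at most one type II move plus one type III move per vertical strand in the way, so everything hinges on the bookkeeping you correctly identify as the crux. But your proposed resolution of the off-by-one is wrong, and it is the one step you defer to the figures. In the factorization of Figure~\ref{factor}, the type II move is performed between the two exchanged arcs themselves: the moving horizontal arc, together with its two corner verticals, is pushed as a finger across the stationary horizontal arc, and the two new crossings lie on those corner verticals. These are precisely the two arcs you excluded from $S$ as ``travelling rigidly with $e$.'' So the type II move does not ``handle the outermost of the swept strands''; every one of the $d=|S|$ interior vertical strands still requires its own type III move, and the total is $d+1$, not $d$. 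Combined with your bound $|S|\le n-2$, this yields only $n-1$, and the lemma is not proved.

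The paper closes the gap by a sharper count of $d$ instead: a vertical strand needs a type III move only if it crosses \emph{both} exchanged horizontal arcs, hence passes strictly inside the span of the shorter one. If $a$ is the length of the shorter arc, then $d<a$, and since $a\le n-2$ one gets $d+1\le a\le n-2$; the crossing-preserving exchange needs only $d$ type III moves and is immediate. Equivalently, in your language: besides the moving arc's two endpoint verticals, the stationary arc's endpoint verticals also lie outside the swept region (in the nested configuration of Figure~\ref{exch} they are outside the inner arc's span), and at most one of them can coincide with an endpoint vertical of the moving arc, so at least three distinct vertical arcs are excluded and $|S|\le n-3$, whence $|S|+1\le n-2$. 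Either repair salvages your argument, but as written the count stops at $n-1$.
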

\begin{proof}
Clearly, a destabilization move requires at most one Reidemeister move, a type I move. Now consider the first exchange move pictured in Figure~\ref{exch}. Let $d$ be the number of vertical strands intersecting both of the horizontal strands to be switched. Then the move requires $d$ type III moves and one type II move. Thus, the exchange move requires $d+1$ Reidemeister moves. We note that $d<a$, if $a$ is the length of the shorter horizontal arc. But $a$ cannot be greater than $n-2$, so the number of Reidemeister moves required is less than or equal to $n-2$. Similarly, the second exchange move pictured above requires $d$ type III moves but no type II moves. Thus, both pictured exchange moves require no more than $n-2$ Reidemeister moves. We note that other versions of the exchange moves (where the horizontal arcs lie in distinct halves of the arc--presentation) require no Reidemeister moves. 
\end{proof}

For the finale, we put our two results together.

\begin{theorem}
Suppose $K$ is a diagram (in Morse form) of the unknot with crossing number $cr(K)$ and number of maxima $b(K)$. Let $M=2b(K)+cr(K)$. Then the number of Reidemeister moves required to unknot $K$ is less than or equal to $$\sum_{i=2}^M\frac{1}{2}i[(i-1)!]^2(M-2).$$
\end{theorem}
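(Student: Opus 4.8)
The plan is to assemble the three preceding lemmas into a single count, since the theorem is essentially their product. First I would invoke Lemma~\ref{arclemma} to pass from the Morse diagram $K$ to an arc--presentation $L_K$ of complexity $c(L_K)\le 2b(K)+cr(K)=M$; crucially, this passage is an ambient isotopy of the plane and costs no Reidemeister moves, so it contributes nothing to the final tally. With $L_K$ in hand, I would apply Theorem~\ref{triv} to obtain a monotonic simplification
$$L_K=L_0\rightarrow L_1\rightarrow\cdots\rightarrow L_m$$
in which every step is an exchange or destabilization move and $L_m$ is the trivial arc--presentation.

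Next I would bound the two relevant quantities separately and multiply. By Lemma~\ref{size}, the number $m$ of arc--presentation moves in this sequence is at most $\sum_{i=2}^{c(L_K)}\tfrac{1}{2}i[(i-1)!]^2$, and since $c(L_K)\le M$ this is in turn bounded by $\sum_{i=2}^{M}\tfrac{1}{2}i[(i-1)!]^2$. By Lemma~\ref{reid}, a single exchange or destabilization move on an arc--presentation of complexity $n$ factors through at most $n-2$ Reidemeister moves (with several types of move costing zero, which only helps the estimate). Translating the entire arc--move sequence into Reidemeister moves by the canonical procedure of the previous section, the total number of Reidemeister moves needed to unknot $K$ is therefore at most $m$ times the per--move bound, giving $\left(\sum_{i=2}^{M}\tfrac{1}{2}i[(i-1)!]^2\right)(M-2)$.

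The one point requiring care --- and the main, if modest, obstacle --- is that the per--move bound in Lemma~\ref{reid} depends on the complexity $n$ of the arc--presentation on which the move is actually performed, whereas the theorem asks for a single uniform factor of $M-2$. Here I would appeal to the defining feature of Dynnikov's trivialization in Theorem~\ref{triv}: it employs only exchange and destabilization moves, and neither of these ever increases complexity. Hence $c(L_k)\le c(L_0)=c(L_K)\le M$ for every $k$, so each individual move in the sequence costs at most $M-2$ Reidemeister moves and the uniform factor is justified. With this observation the product estimate $m\cdot(M-2)\le\left(\sum_{i=2}^{M}\tfrac{1}{2}i[(i-1)!]^2\right)(M-2)$ follows at once, completing the argument.
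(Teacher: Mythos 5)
Your proposal is correct and follows essentially the same argument as the paper: convert $K$ to an arc--presentation of complexity at most $M$ via Lemma~\ref{arclemma}, bound the number of moves $m$ in Dynnikov's monotonic simplification by Lemma~\ref{size}, bound the Reidemeister cost of each move by Lemma~\ref{reid}, and multiply. Your explicit observation that exchange and destabilization moves never increase complexity---so the uniform factor $M-2$ applies to every step---is a point the paper leaves implicit, and spelling it out only strengthens the write-up.
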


\begin{proof}
Suppose the arc--presentation $L_K$ of our knot diagram $K$ has complexity $c(L_K)=n$. Then at most $m (n-2)$ Reidemeister moves are required to produce the trivial (complexity 2) arc--presentation, where $m$ is the number of moves in the monotonic simplification of $L_K$. By our lemma, this quantity is bounded above by $$\sum_{i=2}^n\frac{1}{2}i[(i-1)!]^2(n-2).$$ But we showed that $n\leq 2b(K)+cr(K)=M$, thus the number of Reidemeister moves required to unknot $K$ is less than or equal to $$\sum_{i=2}^M\frac{1}{2}i[(i-1)!]^2(M-2).$$
\end{proof}

\section{A Detour: Bounds for Untangling Links}\label{links}

To illustrate that similar questions may be extended to families of knots and links beyond the unknot, we take a short detour to the world of non-trivial knots and links. In keeping with our theme, we make use of the work of Dynnikov. He proved two other results regarding the simplification of certain link diagrams~\cite{dynnikov}. In a fashion analogous to the previous section, we may use Dynnikov's results to bound the number of Reidemeister moves and the number of crossings needed to simplify certain types link diagrams. Before we state these theorems, however, let us clearly define our terms. 

\begin{figure}[h]
\begin{center}
\includegraphics[height=1.3in]{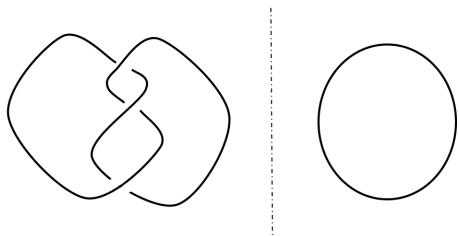}
\end{center}
\caption{ \small{A split link.}}\label{split}
\end{figure}

\begin{figure}[h]
\begin{center}
\includegraphics[height=1.5in]{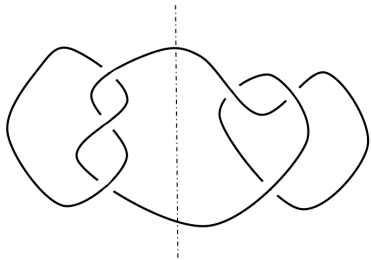}
\end{center}
\caption{ \small{A composite knot.}}\label{composite}
\end{figure}

\begin{definition}
A link diagram $L$ is said to be \emph{split} if there is a line not intersecting $L$ such that there are components of the diagram lying on both sides of the line. A link (or knot) diagram $L$ is \emph{composite} if it can be viewed as a connect sum of two nontrivial links, i.e. if there is a line intersecting the link at two points such that the tangles on either side of the line are non-trivial. In general, a link is said to be split or composite if there exists a diagram of the link that is split or composite. Figures ~\ref{split} and~\ref{composite} give examples illustrating these definitions.
\end{definition}

Let's review the pertinent results from~\cite{dynnikov}.

\begin{theorem}[Dynnikov]
If $L$ is an arc--presentation of a split link, then there exists a finite sequence of exchange and destabilization moves $$L\rightarrow L_1\rightarrow L_2\rightarrow
\cdots\rightarrow L_m$$ such than $L_m$ is split.\end{theorem}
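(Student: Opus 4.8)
The plan is to run the same engine that powers Theorem~\ref{triv}, replacing the spanning disk of the unknot by the $2$-sphere that witnesses splitness. First I would record the topological input: since $L$ is split, there is an embedded $2$-sphere $S$ in $S^3$, disjoint from $L$, separating it into two nonempty sublinks $L'$ and $L''$. The goal is to massage $S$ until splitting can be read directly off the combinatorics of the diagram---that is, until the arc-presentation becomes \emph{split} in the sense defined above---while applying only exchange and destabilization moves to $L$, so that the complexity $c(L)$ never increases.

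To do this I would place $L$ in the standard open book associated to the presentation, whose binding is the axis and whose pages are indexed by the vertical arcs, so that each vertical arc lies in a single page and $c(L)$ counts the occupied pages (the horizontal arcs correspond to binding levels). I would then isotope $S$ rel $L$ into a normal position transverse to the binding and to every page, and among all such positions choose one minimizing, in lexicographic order, first the number of points of $S \cap (\text{binding})$ and then the number of components of the intersection of $S$ with the union of pages. Since each page is a disk meeting $L$ only along its arcs, innermost-disk surgery---standard topological input I would borrow rather than reprove---removes every closed curve of $S \cap (\text{page})$ while keeping the surviving sphere separating, so that $S$ meets each page only in arcs. Inspecting an \emph{outermost} such arc on a page then produces a small local picture of $L$ near $S$; the core claim is that each such picture either already exhibits a horizontal or vertical line separating the diagram---whence $L$ is combinatorially split and we are done---or licenses a destabilization, or an exchange move sweeping a strand across $S$, after which a suitable complexity of the pair $(L,S)$ has strictly dropped.

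The hard part will be exactly this case analysis, and in particular verifying that the simplifying move is always an exchange or a destabilization and \emph{never} a stabilization. This is the subtle point separating Dynnikov's theorems from a naive isotopy argument: a priori, pushing a strand of $L$ across an outermost disk of $S$ could force one to introduce a new page. I would enumerate the finitely many local configurations of an outermost arc relative to its neighboring vertical and horizontal arcs and show, in each, that the move is realizable within the current page set, occasionally after preparatory exchange moves (which are complexity-preserving by definition) that slide the relevant horizontal arcs into distinct halves. Assembling these, I would define a single nonnegative integer complexity of $(L,S)$ that strictly decreases at each step; as it cannot decrease forever the process terminates, and by the dichotomy above it can terminate only at a split arc-presentation. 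Treating the closed-curve elimination and the existence of the separating sphere as standard input from~\cite{dynnikov}, the genuinely new content is thus concentrated in the outermost-arc casework that assembles the sequence $L \rightarrow L_1 \rightarrow \cdots \rightarrow L_m$.
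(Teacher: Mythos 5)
The first thing to note is that the paper offers no proof of this statement at all: it is imported verbatim from Dynnikov's paper~\cite{dynnikov} (the section merely says ``Let's review the pertinent results''), just as Theorem~\ref{triv} is. So there is no in-paper argument to match yours against; the relevant comparison is with Dynnikov's own proof, and your sketch does correctly reconstruct its overall shape---place the arc--presentation in the open book whose binding is the axis, take the separating $2$-sphere as the characteristic surface in place of the unknot's spanning disk, normalize its intersection with the pages, and drive a monotone simplification by exchange and destabilization moves until splitness is visible combinatorially.

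As a proof, however, your proposal has a genuine gap, and it sits exactly where you flag ``the hard part.'' The entire mathematical content of Dynnikov's theorem is the claim that the outermost-arc (in his treatment, foliation-on-the-sphere) case analysis \emph{always} yields an exchange or destabilization and never forces a stabilization, together with an explicit complexity of the pair $(L,S)$ (essentially the number of points of $S$ meeting the binding, refined by the page intersections) that strictly decreases. You assert that enumerating ``the finitely many local configurations'' will verify this, but you neither exhibit the complexity function nor dispose of any configuration; in Dynnikov's paper this occupies the bulk of the argument, requires a careful study of the singular foliation induced on $S$ by the pages and the elimination of bad vertices, and also requires proving the terminal dichotomy---that when no simplifying move exists the sphere can be made a union of pages and half-axes, exhibiting the split. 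Without these, the proposal is a roadmap rather than a proof. There is also a mild circularity to repair: you ``borrow'' the closed-curve elimination and normalization steps from~\cite{dynnikov} itself; that is harmless if your goal is exposition of Dynnikov's proof, but it should be stated as such rather than as independent input. None of this means the approach would fail---it is the right approach, being Dynnikov's---but the deferred casework is the theorem, not a routine verification.
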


\begin{theorem}[Dynnikov]
If $L$ is an arc--presentation of a non-split composite link, then there exists a finite sequence of exchange and destabilization moves $$L\rightarrow L_1\rightarrow L_2\rightarrow
\cdots\rightarrow L_m$$ such than $L_m$ is composite.\end{theorem}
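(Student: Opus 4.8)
The plan is to mirror the scheme behind Theorem~\ref{triv} (the unknot case), replacing the spanning disk implicit there by a \emph{decomposing sphere}. Since $L$ is composite, there is a $2$--sphere $S$ meeting $L$ transversally in exactly two points and separating $L$ into two nontrivial tangles, each becoming a nontrivial knot or link after capping off. The goal is to push $S$ into a position compatible with the grid so that the two summands come to lie on opposite sides of a single grid line (meeting it in two points), at which moment the terminal arc--presentation $L_m$ is visibly composite in the sense of the definition above.

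First I would set up the open--book description of arc--presentations that underlies all of Dynnikov's monotonic simplification arguments. Each vertical arc lies in a half--plane, a \emph{page}, hinged along the vertical binding axis, and the complexity $c(L)=n$ counts the pages that are occupied. After a general--position isotopy the intersection of $S$ with the binding axis is a finite point set, and the intersection of $S$ with each page is a disjoint union of arcs and circles. I would then assign to the pair $(L,S)$ a complexity --- for instance a lexicographically ordered pair recording the number of points of $S$ on the axis together with the number of intersection arcs in the pages --- and the heart of the argument is to show that this complexity can be driven strictly down using only exchange and destabilization moves on $L$.

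The reduction step is the usual innermost/outermost argument adapted to the page structure. A circle of intersection that is innermost on $S$ bounds a disk which can be used either to cancel an intersection circle or, when it abuts the link, to guide an arc of $L$ across a page; an outermost arc of intersection in a page similarly yields either a destabilization or a height exchange. The essential content --- and the step I expect to be the main obstacle --- is verifying that each such simplification is realized by the \emph{combinatorial} moves of Figures~\ref{stab} and~\ref{exch} \emph{without ever stabilizing}, i.e. without increasing $n$. This is precisely what makes Dynnikov's theorem both powerful and delicate: it demands a careful case analysis of the local picture of $S$ near each page and near the binding, showing that every isotopy lowering the chosen complexity is carried by exchange moves, which merely permute arc heights, and by destabilizations, which delete a page, and never forces a stabilization.

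Finally, once $(L,S)$ attains minimal complexity, I would argue that $S$ meets the binding axis in the least possible number of points, which confines the two tangles to disjoint bands of pages separated by a single page, equivalently a single grid line after rotation. Here the hypothesis that $L$ is \emph{non--split} is exactly what must be invoked to exclude the degenerate outcome: it prevents $S$ from being isotoped off $L$ entirely, so the two marked points on $S\cap L$ survive to the minimal configuration and the result genuinely exhibits a connected sum rather than a splitting. Reading off the separating grid line, with a nontrivial tangle on each side, then shows that $L_m$ is composite, completing the argument.
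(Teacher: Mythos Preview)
The paper does not contain a proof of this theorem. It is stated as a result of Dynnikov, taken verbatim from~\cite{dynnikov}, and used as a black box to derive Theorems~10 and~11; no argument, sketch, or indication of method is given here. Consequently there is nothing in the present paper to compare your proposal against.

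That said, your outline is broadly faithful to the shape of Dynnikov's original argument: one works in the open--book picture, introduces a characteristic surface (here a decomposing sphere rather than a spanning disc), studies its foliation by the pages, and drives down a suitable complexity by exchange and destabilization moves alone. The main caveat is that what you have written is explicitly a plan rather than a proof. The passage ``each such simplification is realized by the combinatorial moves \ldots\ without ever stabilizing \ldots\ demands a careful case analysis'' is precisely the substance of Dynnikov's paper, and you have not attempted it. A reader of your proposal could not reconstruct the argument from what is here; in particular, the claim that an outermost arc in a page ``yields either a destabilization or a height exchange'' hides the real work (handling saddles of the foliation, ruling out bad cycles, and so on). As a summary of the strategy your write--up is accurate, but it should be labelled as a sketch referring to~\cite{dynnikov} for the details, not presented as a self--contained proof.
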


We note that the statements of Lemmas ~\ref{arclemma},~\ref{size} and~\ref{reid} hold for arbitrary links as well as diagrams of the unknot. Thus, the following result is an immediate consequence of the previous theorems.

\begin{theorem}
Suppose $L$ is a diagram (in Morse form) of a split (resp. non-split composite) link with crossing number $cr(L)$ and number of maxima $b(L)$. Let $M=2b(L)+cr(L)$. Then the number of Reidemeister moves required to transform $L$ into a split (resp. composite) diagram is less than or equal to $$\sum_{i=2}^M\frac{1}{2}i[(i-1)!]^2(M-2).$$
\end{theorem}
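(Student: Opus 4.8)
The plan is to run the argument of the preceding section essentially verbatim, substituting Dynnikov's split--link (resp. non-split composite) simplification theorem for Theorem~\ref{triv}. The key observation, already flagged before the statement, is that Lemmas~\ref{arclemma},~\ref{size}, and~\ref{reid} never used that the link was the unknot: the first produces an arc--presentation of any Morse diagram, the second counts combinatorially distinct arc--presentations regardless of link type, and the third bounds the Reidemeister cost of a single exchange or destabilization move purely in terms of the complexity. So all three tools transfer without change to the present setting.

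First I would apply Lemma~\ref{arclemma} to the Morse diagram $L$, obtaining by an ambient isotopy of the plane (hence at no Reidemeister cost) an arc--presentation $L_K$ of complexity $n=c(L_K)\leq 2b(L)+cr(L)=M$. Next I would invoke the appropriate Dynnikov theorem---the split--link theorem or the non-split composite theorem stated just above---to produce a sequence $$L_K\rightarrow L_1\rightarrow L_2\rightarrow\cdots\rightarrow L_m$$ of exchange and destabilization moves terminating in an arc--presentation $L_m$ that is split (resp. composite). Because only exchange and destabilization moves occur, the complexity never exceeds $n$, so every intermediate arc--presentation has complexity at most $n\leq M$.

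It then remains to count Reidemeister moves. By Lemma~\ref{size}, the number of terms satisfies $m\leq\sum_{i=2}^n\frac{1}{2}i[(i-1)!]^2$, and by Lemma~\ref{reid} each arc--presentation move in the sequence factors through at most $n-2\leq M-2$ Reidemeister moves. Multiplying, the total number of Reidemeister moves carrying $L$ (through its isotopic copy $L_K$) to the split (resp. composite) diagram $L_m$ is at most $$\left(\sum_{i=2}^n\tfrac{1}{2}i[(i-1)!]^2\right)(n-2)\leq\sum_{i=2}^M\tfrac{1}{2}i[(i-1)!]^2(M-2),$$ where the final inequality uses $n\leq M$ together with the nonnegativity of every summand.

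The step requiring the most care---though it is not really an obstacle once the framework is in place---is the very last identification: one must check that an arc--presentation which is split (resp. composite) in Dynnikov's combinatorial sense is, read as an ordinary planar diagram, a split (resp. composite) link diagram in the sense of our definition. This is immediate, since an arc--presentation is just a particular rectangular diagram, and the splitting line (resp. the two--point separating line) for $L_m$ serves directly as the line required by the definition. With this in hand the theorem follows exactly as the unknotting bound did.
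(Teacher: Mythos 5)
Your proposal is correct and follows essentially the same route as the paper, which simply notes that Lemmas~\ref{arclemma}, \ref{size}, and~\ref{reid} hold for arbitrary links and declares the theorem an immediate consequence of Dynnikov's split and composite simplification theorems. Your write-up merely makes explicit the same chain of reasoning used for the unknotting bound (arc--presentation of complexity $n\leq M$, at most $\sum_{i=2}^n\frac{1}{2}i[(i-1)!]^2$ moves, each costing at most $n-2\leq M-2$ Reidemeister moves), including the harmless final check that a combinatorially split (resp.\ composite) arc--presentation is split (resp.\ composite) as a planar diagram.
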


Similarly, we have the following extension of our results regarding maximum crossing numbers in a simplifying Reidemeister sequence.

\begin{theorem}
Suppose $L$ is a diagram (in Morse form) of a split (resp. non-split composite) link with crossing number $cr(L)$ and number of maxima $b(L)$. Then for every $i$, the crossing number $cr(L_i)$ is no more than $(M-2)^2$ where $M=2b(L)+cr(L)$ and $L=L_0, L_1, L_2, ..., L_N$ is a sequence of link diagrams such that $L_{i+1}$ is obtained from $L_i$ by a single Reidemeister move, $L_N$ is split (resp. composite).
\end{theorem}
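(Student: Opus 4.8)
The plan is to run the same argument that establishes the crossing bound in the unknot case, substituting Dynnikov's monotonic--simplification theorems for split and composite links in place of his trivialization theorem (Theorem~\ref{triv}). The essential point is that all three of these theorems produce their simplification sequences using only exchange and destabilization moves, and never a stabilization move; since stabilization is the only move that raises complexity, the bound on complexity---and with it the bound on crossing number---transfers without change, regardless of whether the terminal diagram is trivial, split, or composite.

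First I would apply Lemma~\ref{arclemma}, which (as noted above) holds for arbitrary link diagrams, to realize the Morse diagram $L$ as an arc--presentation $L_L$ of complexity $c(L_L)=n\leq 2b(L)+cr(L)=M$. This conversion is an ambient isotopy of the plane and introduces no crossings and no Reidemeister moves. Next, according to whether $L$ is split or non--split composite, I would invoke the corresponding theorem of Dynnikov stated above to obtain a sequence of arc--presentations beginning at $L_L$ and terminating in a split (resp.\ composite) arc--presentation, in which each diagram differs from its successor by an exchange or a destabilization move. Because no stabilization moves occur, every arc--presentation in this sequence has complexity at most $n\leq M$.

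The core of the argument is then identical to the crossing analysis already carried out in the unknot case. A destabilization move either preserves the crossing number (when it is a plane isotopy) or decreases it by one (when it is a simplifying type~I move), so destabilizations never add crossings. An exchange move factors through a Reidemeister sequence of several type~III moves together with at most one type~II move; the type~III moves preserve crossing number, and a type~II move is required exactly when the exchange move itself raises the arc--presentation's crossing number by two. Translating the whole arc--presentation sequence into Reidemeister moves therefore yields a sequence $L=L_0,L_1,\dots,L_N$ whose arc--presentation members are precisely the diagrams produced by Dynnikov's theorem, while the remaining intermediate diagrams carry no more crossings than the adjacent arc--presentations.

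Finally, since each arc--presentation in the sequence has complexity at most $M$, and since an arc--presentation of complexity at most $M$ has at most $(M-2)^2$ crossings, it follows that $cr(L_i)\leq (M-2)^2$ for every $i$. I do not anticipate a genuine obstacle: the crossing estimate depends only on the bounded complexity of the intermediate arc--presentations and not at all on the nature of the terminal diagram, so the only point demanding care is the verification that Dynnikov's split-- and composite--link theorems, like his trivialization theorem, employ solely exchange and destabilization moves.
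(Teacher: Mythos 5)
Your proposal is correct and follows essentially the same route as the paper: the paper likewise observes that Lemma~\ref{arclemma} holds for arbitrary links and then derives this theorem as an immediate consequence of Dynnikov's split-- and composite--link theorems by rerunning the crossing analysis from the unknot case verbatim (destabilizations never add crossings, each exchange move factors through at most one type~II move and several type~III moves, and every intermediate arc--presentation has complexity at most $M$ and hence at most $(M-2)^2$ crossings). In fact, you have spelled out the details more fully than the paper, which leaves this verification implicit.
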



\section{Hard Unknots}

We have provided several upper bounds regarding the complexity of the Reidemeister sequence required to simplify an unknot. The bound that Dynnikov's work helps us obtain for the number of Reidemeister moves required to unknot an unknot is superexponential. Using a different technique, Hass and Lagarias were able to find a bound that is exponential in the crossing number of the diagram~\cite{hl}. They use the same technique to find an exponential bound for the number of crossings required for unknotting. For bounds of this second sort, the one presented here is a comparatively sharper estimate. 

Regarding lower bounds, it was recently shown in ~\cite{hn} that there are unknot diagrams for which the number of Reidemeister moves required for unknotting is quadratic in the crossing number of the initial diagram. In~\cite{hayashi}, similar quadratic lower bounds are given for links. On the other hand, little is known about how many additional crossings an unknot diagram might require in order to become unknotted. While the upper bound on the number of crossings needed in a Reidemeister sequence is merely quadratic in the crossing number of the initial unknot diagram, it nonetheless seems likely that this bound is far from being tight. 

Let us return to our friend, the Culprit. This famous hard unknot diagram was originally discovered by Ken Millett and introduced in~\cite{Culprit}. Recall that hard unknots are difficult to unknot by virtue of the fact that no simplifying type I or type II Reidemeister moves and no type III moves are available. In Figure~\ref{culprit}, we picture a Morse diagram of the Culprit, its corresponding rectangular diagram, and its arc-presentation obtained by rotating crossings where the over-strand was horizontal. Note that we need not specify crossing information in the arc--presentation, for it is assumed that all vertical lines pass over horizontal lines.

\begin{figure}[h]
\begin{center}
\includegraphics[height=1.5in]{Culprit.eps}\includegraphics[height=1.5in]{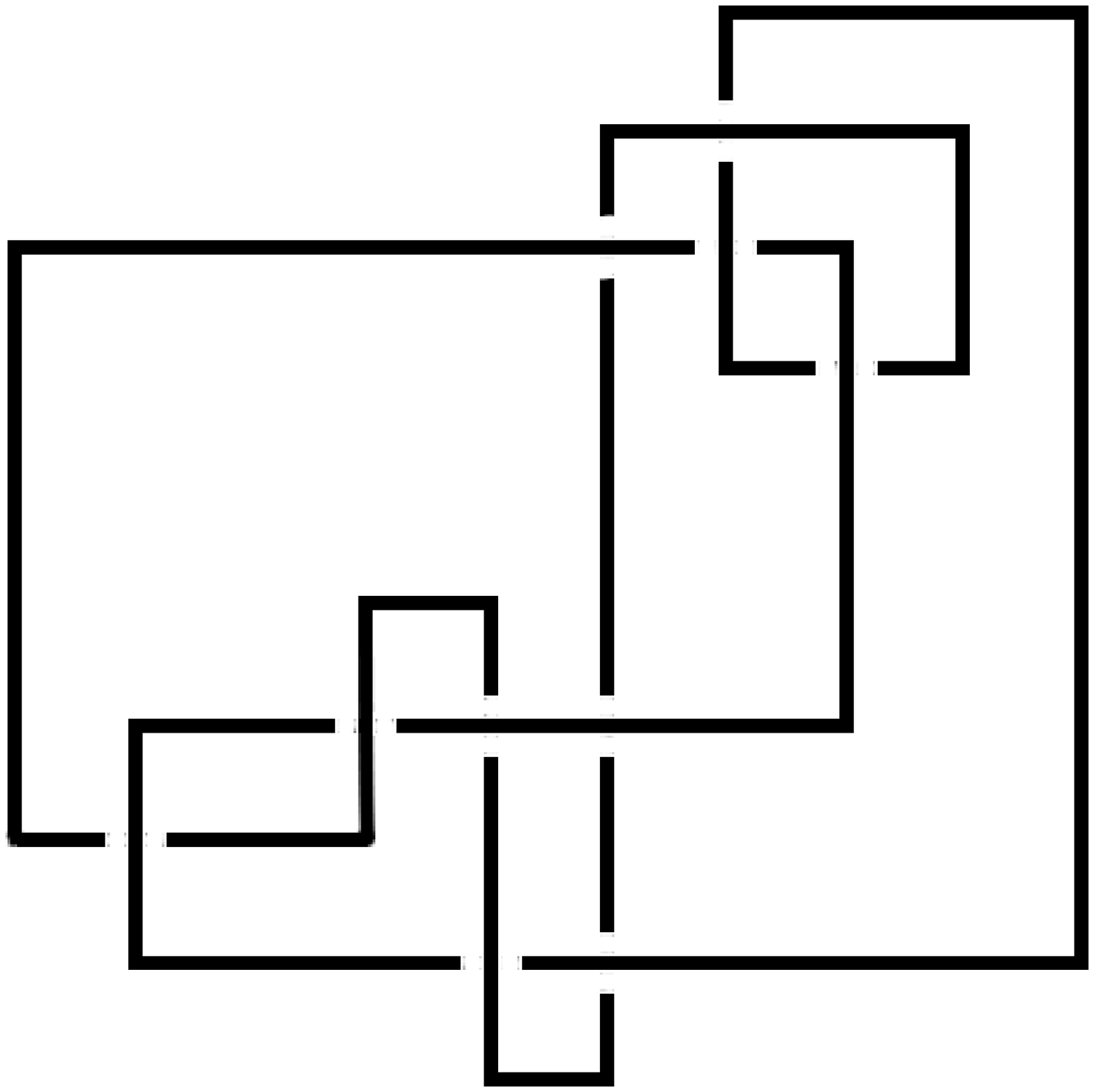}\includegraphics[height=1.5in]{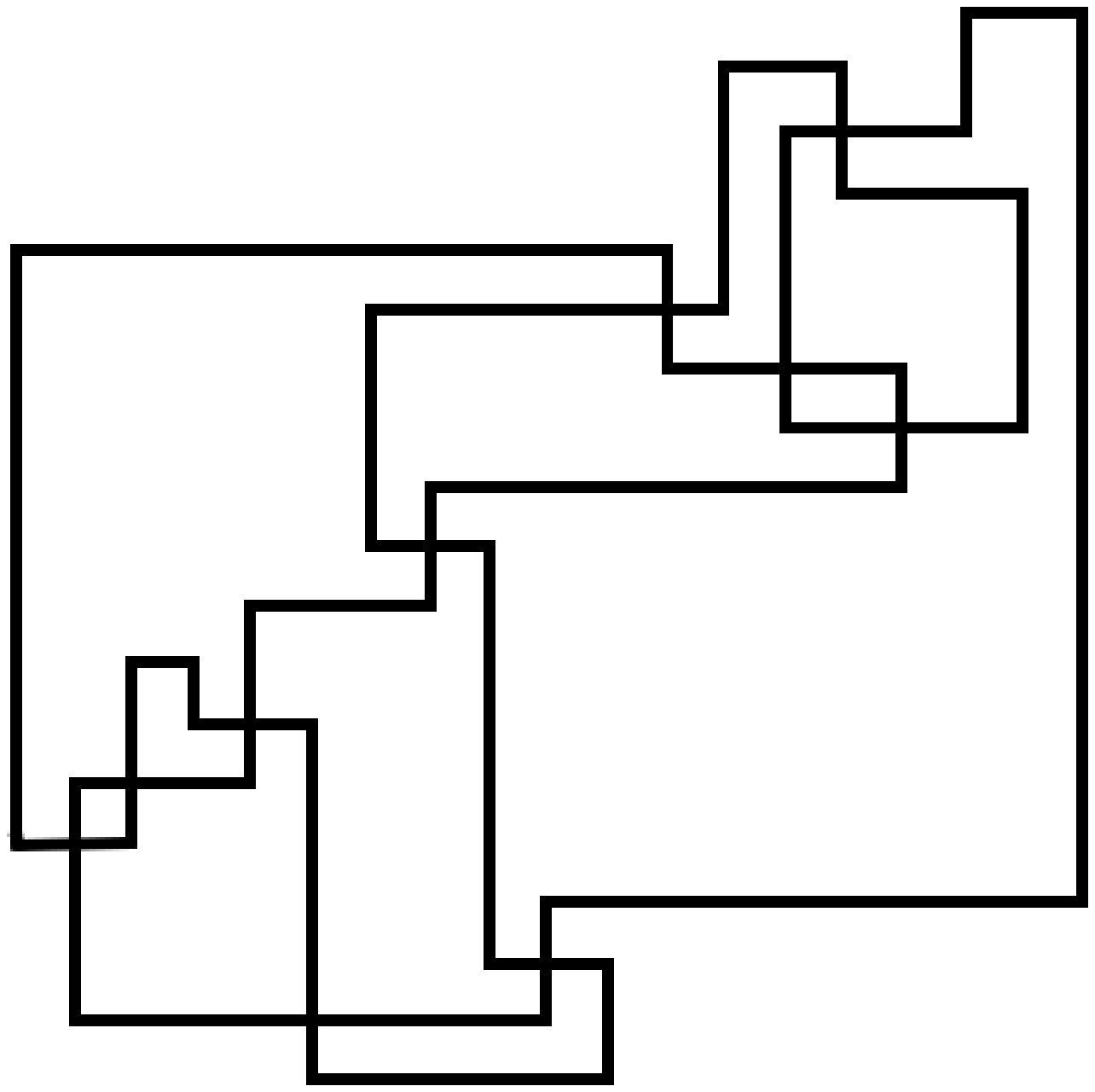}
\end{center}
\caption{ \small{The Culprit with its rectangular diagram and arc--presentation.}}\label{culprit}
\end{figure}

We saw that the Culprit may be unknotted with ten Reidemeister moves. (See also ~\cite{KL}.) The maximum crossing number of all diagrams in the given Reidemeister sequence is 12, two more than the number of crossings in the Culprit. On the other hand, we can compute our upper bound on the number of crossings required for unknotting as follows. Since the crossing number $cr(K)=10$ and the number of maxima in the diagram is $b(K)=5$, we see that $M = cr(K) + 2b(K)=20$. Thus, our bound is $(M-2)^2=18^2=324.$ 

We can also use $M$ to find our bound for the number of Reidemeister moves required to unknot the Culprit. $$\sum_{i=2}^M\frac{1}{2}i[(i-1)!]^2(M-2)=9\sum_{i=2}^{20}i[(i-1)!]^2.$$ The largest term in this expression is roughly $10^{35}$, unfortunately quite a bit larger than ten.

We challenge the reader to find examples where the maximum crossing number is closer to our bound and where the number of needed Reidemeister moves is large in comparison to the number of crossings in the original diagram.

\section{Conclusions}

We've considered the phenomenon that it may be quite hard to unknot a trivial knot and have provided several upper and lower bounds on the number of Reidemeister moves and the number of crossings needed to do the job. Known hard unknots like the Culprit and examples from~\cite{hn} illustrate that unknotting can be tricky, but not as tricky as the upper bounds that are known would have us believe. To answer the questions we've posed, there is much more to be done.

\section{Acknowledgements}

The authors would like to thank Jeffrey Lagarias and John Sullivan for their valuable comments.

\bibliographystyle{abbrv}
\bibliography{ReidBoundExposition3}

\end{document}